\documentclass[10pt]{article}
\usepackage{amsmath}
\usepackage{amsbsy}
\usepackage{amssymb}
\usepackage{amsthm}
\usepackage{anyfontsize}
\usepackage{epsfig}
\usepackage{epstopdf}
\usepackage{wrapfig}
\usepackage{color,cancel}
\usepackage{fullpage}
\usepackage{float}
\usepackage[demo]{adjustbox}
\usepackage[demo]{graphicx}
\usepackage{caption}
\usepackage{subcaption}
\usepackage{bbm}
\usepackage{calrsfs}
\usepackage{url} 
\pdfoutput=1

\DeclareMathOperator*{\esssup}{ess\,sup}

\usepackage{amsmath,amsfonts,amssymb,color,graphicx}
\newtheorem{theorem}{Theorem}[section]
\newtheorem{Algorithm}{Algorithm}[section]
\newtheorem{corollary}{Corollary}[section]
\newtheorem{lemma}{Lemma}[section]

\newtheorem{remark}{Remark}[section]

\numberwithin{equation}{section}
\newcommand{\norm}[1]{\left\Vert#1\right\Vert}

\newcommand{\bu}{{\bf u}}
\newcommand{\bv}{{\bf v}}
\newcommand{\bw}{{\bf w}}
\newcommand{\bfx}{{\bf x}}
\newcommand{\bx}{{\bf x}}
\newcommand{\be}{{\bf e}}
\newcommand{\bff}{{\bf f}}

\newcommand{\bphi}{{\boldsymbol \phi}}

\newcommand{\bfX}{{\bf X}}
\newcommand{\bfV}{{\bf V}}

\usepackage{color}
\usepackage[section]{placeins}
\makeatletter
\AtBeginDocument{%
	\expandafter\renewcommand\expandafter\subsection\expandafter{%
		\expandafter\@fb@secFB\subsection
	}%
}

\date{}

\title{Time Filtered Second Order Backward Euler Method for EMAC Formulation of Navier-Stokes Equations  }

\author{
	Medine Demir
	\thanks{Department of Mathematics, Middle East Technical University, 06800 Ankara, Turkey; dmedine@metu.edu.tr}
	\and
	Aytekin  \c{C}{\i}b{\i}k
	\thanks{Department of Mathematics, Gazi University, 06560 Ankara, Turkey; abayram@gazi.edu.tr}
	\and
	Song\"{u}l Kaya
	\thanks{
		Department of Mathematics, Middle East Technical University, 06800 Ankara, Turkey; smerdan@metu.edu.tr.}
}
\begin{document}

\maketitle
\textbf{Abstract.} This paper considers the backward Euler based linear time filtering method for the developed energy-momentum-angular momentum conserving (EMAC) formulation of the time dependent-incompressible Navier-Stokes equations in the case of weakly enforced divergence constraint. The method adds time filtering as a post-processing step to the EMAC formulation to enhance the accuracy and to improve the approximate solutions. We show that  in comparison with the Backward-Euler based EMAC formulation without any filter, the proposed method not only leads to a 2-step, unconditionally stable and second order accurate method but also increases numerical accuracy of solutions. Numerical studies verify the theoretical findings and demonstrate preeminence of the proposed method over the unfiltered case. 
\textbf{Keywords:} Navier-Stokes equations; EMAC formulation; time filter; finite element.


\section{Introduction}

 Due to its widespread use in both science and engineering, the importance of Navier-Stokes equations (NSE) in computational fluid dynamics is indisputable. Since finding an analytical solution is extremely hard, especially under small viscosity, many studies have been done over decades related with Galerkin finite element methods for incompressible flows, see, e.g., \cite{ VP, W, HMJ, K}. The success of numerical simulation of NSE's depends on  physically motivated discretization schemes. In classical $H$-conforming methods, the divergence constraint is only weakly enforced which leads to loss of numerical accuracy as well as many important conservation laws, including energy, momentum, angular momentum, and others. It is well-known that a good way to measure the accuracy of a model is by how much physical balances it retains. For this purpose, practitioners have developed many numerical methods, see e.g. \cite{ JW, ML, L07}.  However, none of them conserve all the balances of physical quantities such as  energy-momentum-angular momentum conserving (EMAC) at the same time. To handle this issue, in the study \cite{OTML},  the authors introduce a new formulation of NSE by reformulating the nonlinear term, named as the EMAC formulation which is the first scheme conserving all of these mentioned quantities even under the weakly enforced divergence constraint. In the light of studies following the original paper \cite{OTML}, e.g. \cite{STML, DO, OG}, the EMAC formulation has proven to exhibit superior performance  especially over longer time periods \cite{ML20} compared to traditionally used schemes based on skew-symmetric formulations. All these studies agree in the opinion that the EMAC formulation reduces numerical dissipation and significantly increases numerical accuracy compared to previously studied ones  thanks to its ability to conserve all the physical quantities.
 
The central challenge in the CFD community is time accuracy to reflect important physical features of      solutions. From the practical point of view, incorporating minimum complexity into existing codes and increasing numerical accuracy are critical for many purposes. In general, Backward Euler (BE) time discretization is well-known to be one of the most commonly used method to approximate the time dependent viscous flow problems, \cite{VG,FJ}. BE is the basis for construction of more complex methods due to its stability, rapid convergence to analytical solution and easy implementation. However,  it causes spurious  oscillations in flow physics \cite{GS}  for larger time step sizes even if the method is stable. The way to increase numerical accuracy and enhance physical features for fully implicit and BE methods was first proposed in the study \cite{GW} for the ODEs. 

As stated in \cite{GS}, by adding only one additional line of code to BE scheme, it is possible to reduce numerical dissipation, to increase accuracy from first order to the second order and to obtain an $A$-stable method with a useful error estimator. Beside constant time steps, the methods could be extended also to variable time step sizes. The method, called time filtering, is extended to the incompressible NSE in \cite{DZ}, to MHD equations in \cite{AC} and to the Boussinesq equations in \cite{MN}.  The common theme in these studies is that applying the linear time filtering method to BE scheme with constant time step achieves a second order, more accurate and an $A$-stable method. Moreover, combining the second step with the first step yields a second order accurate method akin to backward differentiation (BDF2) formula and performs a consistent and simple stability/convergence analysis. While a family of general variable step size time filter algorithms is investigated by \cite{DS},  general linear methods by using pre-filter or post-filter by using constant step size have been considered in \cite{DL}. Such filtering processes yields embedded higher order methods with minimum complexity, see \cite{DL}.

The purpose of this report is to investigate the effect of this novel idea from \cite{DZ} on the EMAC formulation of time-dependent, incompressible fluid flows for constant time steps. Herein, based on the success of EMAC formulation in \cite{OTML} and time filtering on BE for constant time step in \cite{DZ}, we naturally took the step of combining these two ideas in order to see whether time filtering method will improve the accuracy of solutions of NSE with EMAC formulation. The proposed numerical scheme is a two-step time filtered BE method which is efficient, $O(\Delta t^2)$, $A$-stable and easy to adapt into any existing code. In the first step, velocity solutions of the EMAC scheme is calculated with the usual BE finite element discretization, which we call as BE-EMAC. The second step post proceeds this velocity approximation by using a second order time filtering under constant step size. Thus, combining the BE discretized EMAC formulation of NSE with time filtering, one benefits from the simplicity of BE method and does not suffer from its mentioned drawbacks. The EMAC treatment of non-linearity uses Newton method. Although the BE time stepping scheme does not conserve energy exactly as Crank-Nicolson scheme does, our aim here is to show the improvement which has been done due to the addition of time filter post-processing. We do not assert the superiority of the scheme over Crank-Nicolson based time stepping schemes in terms of conservation of total kinetic energy.
We show the conservation of the physically conserved quantities such as (a modified) energy, momentum and angular momentum and we refer it as energy, momentum, angular momentum conserving time filtered formulation (EMAC-FILTERED). Additionally, we provide that the method is both stable and optimally accurate. To the best of authors' knowledge, this is the first attempt to combine EMAC formulation with a time filtering post-processing for solving NSE numerically.

The presentation of this paper is as follows. Section 2 provides some notations and mathematical preliminaries needed for a smooth analysis to follow. In Section 3, EMAC-FILTERED  method is described. Conservation properties are studied in Section 4.  Section 5 is devoted to a complete stability and convergence  analysis of the EMAC-FILTERED method. In Section 6, several numerical experiments are performed which test the conservation properties, the accuracy and the efficiency of the method and compares it with BE-EMAC solutions. Finally, conclusions and possible future directions are drawn in Section 7.

\section{ Notations and Mathematical Preliminaries}

 Let $\Omega$ in ${\rm I\!R} ^d$, $ (d=2,3) $ is a convex polygonal or polyhedral domain  with boundary $\partial \Omega$. In this work, we consider NSE 
\begin{equation}\label{nse}
	\begin {array}{rcll}
	\bu_t -\nu\triangle \bu+\bu\cdot \nabla \bu+ \nabla p &=& \bf f &
	\mathrm{in }\ \Omega\times (0,T], \\
	\nabla \cdot \bu&=& 0& \mathrm{in }\ \Omega\times  (0,T],\\
	\bu&=& \mathbf{0}& \mathrm{on }\ \partial\Omega\times[0,T],\\
	\bu(\bx,0)&=& \bu_{0}(\bx) & \mathrm{for}\  \bx \in \Omega.
\end{array}
\end{equation}
Here $\bu$ stands for  the velocity, $p$ the zero-mean pressure, $\bff$ an external force and $\nu$ the kinematic viscosity.

We  use the following notations throughout the paper. Let Lebesgue spaces $(L^{p}(\Omega))^{d}$, $ 1\leq p\leq \infty $, $ p\neq 2$ be equipped with  $(\cdot,\cdot)_{L^p}$ and the  norm $\norm{.}_{L^{p}}$, respectively. In particular, $L^2$-inner product and $L^2$- norm are denoted by $(\cdot,\cdot)$ and $\norm{\cdot}$, respectively. The norm in Sobolev spaces $(H^{k}(\Omega))^{d}$ is denoted by  $\norm{.}_{k}$ and all other norms will be labeled appropriately. The natural velocity field and pressure spaces are
 \begin{eqnarray}
 &&\bfX:=(H_0^1(\Omega))^{d}:=\{v\in L^2(\Omega)^{d}\ : \nabla v \in L^2(\Omega)^{d\times d}\ \text{and} \ v=0 \ \text{on} \ \partial\Omega \}, \\
 &&Q:=L_0^2(\Omega):=\{q\in L^2(\Omega): \int_{\Omega}q \,d\bx=0 \}.
 \end{eqnarray}
 The dual space of $\bfX$ is denoted by $H^{-1}(\Omega)$ and is endowed with norm $\norm{ \cdot}_{-1} $.
 
 The divergence free subspace $\bfV$ of $\bfX$ is defined by
 $$\bfV := \{\bv \in \bfX: (\nabla\cdot \bv, q)=0, \quad \forall  q \in  Q\}.$$
For all Lebesgue measurable functions $w: [0,T]\rightarrow \bfX $, the following norms are denoted by
 \begin{eqnarray}
 \norm{w}_{L^p(0,T;X)}&=&\bigg(\int_0^T\norm{w(t)}^p_{\bfX}dt\bigg)^{1/p}, \quad 1\leq p < \infty \nonumber\\
 \norm{w}_{L^{\infty}(0,T;X)}&=&\esssup_{0\leq t\leq T}\norm{w(t)}_{\bfX} \nonumber
\end{eqnarray}
For a spatial discretization, let $\bfX_h \subset \bfX, Q_h\subset Q$ be two families of finite element spaces defined on a regular, conforming triangulation $\pi_h$  of the domain $\Omega$ with maximum diameter $h$ satisfying the discrete inf-sup condition,
  \begin{eqnarray}
  \inf_{q_h\in{Q}^h}\sup_{\bv_h\in {\bfX}^h}\frac{(q_h,\,\nabla\cdot
  	\bv_h)}{||\,\nabla \bv_h\,||\,||\,q_h\,||}\geq \beta_0 > 0.
  \label{infsup}
  \end{eqnarray}
   Define the discretely divergence-free subspace of $\bfX_h$ by
  \begin{eqnarray}
  \bfV_{h}&:=&\{ \bv_{h}\in \bfX_{h} \ | (\nabla\cdot \bv_{h},q_{h})=0 \ \forall q_{h} \  \in Q_{h}\}.\nonumber
  \end{eqnarray}
Note that, under the inf-sup condition, (\ref{infsup}) the variational formulation of NSE (\ref{nse})
in $(\bfX_h,Q_h)$ is equivalent to in $(\bfV_h,Q_h)$, see, e.g.,  \cite{GR79}. 


For any $v \in \bfX_h$, the standard inverse inequality is given by 
 \begin{eqnarray}
 \norm{\nabla\bv}\leq Ch^{-1}\norm{\bv} \quad \forall \bv \in H_0^1(\Omega)\label{inv}.
 \end{eqnarray}

In addition, it will be assumed that the spaces $(\bfX_h,Q_h)$ comprise piecewise polynomials of degree at most $(k,k-1)$ and the following well-known approximation properties hold for all $(v_h, q_h) \in ({\bf X}_h,Q_h)$, (see, e.g., \cite{Joh16}),
\begin{eqnarray}
\inf_{\bv_h\in{\bfX}_h} \left( \|(\bu-\bv_h)\|+h\|\nabla(\bu-\bv_h)\| \right)&\leq&
C h^{k+1} \| \bu \|_{k+1}\quad{\bu} \in H^{k+1}(\Omega),\label{ap1}
\\
\inf_{q_h\in{Q}_h}\|p-q_h\|&\leq & Ch^k \|p\|_{k}\quad {p} \in H^{k}(\Omega).\label{ap10}
\end{eqnarray}
For the convergence analysis, we also need the following discrete Gronwall's lemma stated in \cite{GR79} :
\begin{lemma}(Discrete Gronwall Lemma)\label{gl}
	Let $\Delta t$, M and $\alpha_{n},\beta_{n},\xi_{n},\delta_{n}$ (for integers n$\geq$ 0) be nonnegative finite numbers such that
	\begin{eqnarray}
	\alpha_{m}+\Delta t\sum_{n=0}^{m} \beta_{n} \leq \Delta t\sum_{n=0}^{m}\delta_{n}\alpha_{n}+\Delta t\sum_{n=0}^{m}\xi_{n}
	+M
	\end{eqnarray}
	Suppose that $\Delta t \delta_{n}<1$ for all n. Then,
	\begin{eqnarray}
	\alpha_{m}+\Delta t\sum_{n=0}^{m} \beta_{n} \leq \exp \Big(\Delta t\sum_{n=0}^{m}\beta_n\dfrac{\delta_{n}}{1-\Delta t \delta_{n}}\Big)\Big(\Delta t\sum_{n=0}^{m}\xi_{n}
	+M\Big) \quad \text{for} \quad m\geq 0
	\end{eqnarray}
	
\end{lemma}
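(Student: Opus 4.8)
The plan is to prove the estimate by induction on $m$, using as the only analytic input the elementary bound $1+y\le e^{y}$, valid for every $y\ge 0$. The first step is to separate the top-index term from the sum $\Delta t\sum_{n=0}^{m}\delta_n\alpha_n$ on the right-hand side of the hypothesis and to move it to the left. Because $\Delta t\,\delta_m<1$ by assumption, the coefficient $1-\Delta t\,\delta_m$ is strictly positive, and we obtain
\[
(1-\Delta t\,\delta_m)\,\alpha_m+\Delta t\sum_{n=0}^{m}\beta_n
\le \Delta t\sum_{n=0}^{m-1}\delta_n\alpha_n+\Delta t\sum_{n=0}^{m}\xi_n+M .
\]
Dividing by $1-\Delta t\,\delta_m\in(0,1]$, and using $1-\Delta t\,\delta_m\le 1$ to keep the nonnegative dissipation sum $\Delta t\sum\beta_n$ on the left, this rearranges into a recursion of the form
\[
\alpha_m+\Delta t\sum_{n=0}^{m}\beta_n
\le \frac{1}{1-\Delta t\,\delta_m}\Big(\Delta t\sum_{n=0}^{m-1}\delta_n\alpha_n+C_m\Big),
\qquad C_m:=\Delta t\sum_{n=0}^{m}\xi_n+M .
\]

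Next I would record that the amplification factor can be replaced by an exponential: since $\frac{1}{1-\Delta t\,\delta_m}=1+\frac{\Delta t\,\delta_m}{1-\Delta t\,\delta_m}$, the bound $1+y\le e^{y}$ with $y=\frac{\Delta t\,\delta_m}{1-\Delta t\,\delta_m}\ge 0$ gives $\frac{1}{1-\Delta t\,\delta_m}\le \exp\!\big(\frac{\Delta t\,\delta_m}{1-\Delta t\,\delta_m}\big)$. With this in hand I would carry out the induction: the base case $m=0$ follows directly from the last display (the history sum is empty), and for the inductive step I would substitute the estimates already proved for $\alpha_0,\dots,\alpha_{m-1}$ into the accumulated term $\Delta t\sum_{n=0}^{m-1}\delta_n\alpha_n$. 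Using that the forcing $C_n$ is nonnegative and nondecreasing in $n$, so that $C_n\le C_m$, the products of the individual factors $\exp\!\big(\frac{\Delta t\,\delta_n}{1-\Delta t\,\delta_n}\big)$ collapse, by additivity of the exponent, into the single factor $\exp\!\big(\Delta t\sum_{n=0}^{m}\frac{\delta_n}{1-\Delta t\,\delta_n}\big)$, which is exactly the claimed bound.

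The step I expect to be the main obstacle is the inductive control of the \emph{entire history} $\Delta t\sum_{n=0}^{m-1}\delta_n\alpha_n$ rather than a single lagged term: a naive one-step recursion does not close, so the cleaner route is to track the partial sums $S_m:=\Delta t\sum_{n=0}^{m}\delta_n\alpha_n$, derive from the above the scalar recursion $S_m\le \frac{1}{1-\Delta t\,\delta_m}S_{m-1}+\frac{\Delta t\,\delta_m}{1-\Delta t\,\delta_m}C_m$, and verify that iterating it produces a telescoping product of $(1-\Delta t\,\delta_n)^{-1}$-type factors that is dominated by the single exponential via $\prod_n(1+y_n)\le \exp(\sum_n y_n)$; one then reinserts the $\beta$-sum using the first display. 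Incidentally, the argument yields the exponent $\exp\!\big(\Delta t\sum_{n=0}^{m}\frac{\delta_n}{1-\Delta t\,\delta_n}\big)$, so the factor $\beta_n$ appearing inside the exponential in the displayed statement is a typographical slip and plays no role.
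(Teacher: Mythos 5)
The paper does not actually prove this lemma: it is quoted from the cited reference without proof, so there is no in-paper argument to compare against. Your proposal is correct and is essentially the standard Heywood--Rannacher argument: isolate the top-index term using $\Delta t\,\delta_m<1$, pass to the recursion for the partial sums $S_m=\Delta t\sum_{n=0}^{m}\delta_n\alpha_n$, and dominate the telescoping product of factors $(1-\Delta t\,\delta_n)^{-1}=1+\tfrac{\Delta t\,\delta_n}{1-\Delta t\,\delta_n}$ by a single exponential via $1+y\le e^{y}$; all the steps you flag as delicate (keeping the nonnegative $\beta$-sum on the left, using that $C_n$ is nondecreasing, the identity $\sum_j y_j\prod_{n>j}(1+y_n)=\prod_n(1+y_n)-1$) do close. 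You are also right that the factor $\beta_n$ inside the exponential of the displayed conclusion is a typographical slip: the correct exponent is $\Delta t\sum_{n=0}^{m}\delta_n/(1-\Delta t\,\delta_n)$, which is exactly what your argument produces.
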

\subsection{Preliminaries for EMAC formulation}
We now present some vector identities and properties of the trilinear forms which will have a crucial role in introducing the scheme that we propose.
For a sufficiently smooth velocity field $\bu$, let us denote the symmetric part of $\nabla \bu$ by $\bf \mathbb{D}(\bu)=\frac{\nabla\bu+(\nabla\bu)^T}{2}$ which is the velocity deformation tensor. The following identity is the key idea for the EMAC formulation
\begin{equation}
(\bu\cdot\nabla)\bu=2\bf \mathbb{D}(\bu)\bu-\frac{1}{2}\nabla|\bu|^2
\end{equation}
for which the inertia term is splitted into the acceleration driven by $2\mathbb{D}(\bu)$ and the  potential term further absorbed by redefined pressure.
Following \cite{DFM}, the trilinear form for EMAC formulation is defined by
\begin{equation}
c(\bu,\bv,\bw)=2(\bf \mathbb{D}(\bu)\bv,\bw)+((\nabla\cdot\bu)\bv,\bw)
\end{equation}
For purpose, it is important to assume that $\bu,\bv,\bw \in \bfX$, i.e. no divergence free condition is assumed for any of $\bu, \bv, \bw$. It is critical to add the divergence term in the definition of $c(\cdot,\cdot,\cdot)$ to satisfy the cancellation property, $c(\bv,\bv,\bv)=0$.
The following identities are also used:
\begin{eqnarray}
(\bu\cdot\nabla\bv,\bw)&=&-(\bu\cdot\nabla\bw,\bv)-((\nabla\cdot\bu)\bv,\bw),\label{i1}\\
(\bu\cdot\nabla\bw,\bw)&=&-\frac{1}{2}((\nabla\cdot\bu)\bw,\bw),\label{i2}\\
(\bu\cdot\nabla\bv,\bw)&=&((\nabla\bv)\bu,\bw)=((\nabla\bv)^T\bw,\bu)\label{i3}.
\end{eqnarray}

\subsection{Preliminaries for Time Filtering}

For preliminaries related time filtering, we first start with $G$-norm. Since $G$-stability refers to $A$-stability, the $G$-matrix is often used in BDF2 analysis. The authors refer to \cite{EG02} and references therein about $G$-stability. With respect to the definition of $G$-matrix in \cite{JMR}, with the choices of $\theta=1$ and $\nu=2\epsilon$, $G$-matrix of the described method is as follows:
$$G=\begin{pmatrix}
	\frac{3}{2}&-\frac{3}{4}\\-\frac{3}{4}&\frac{1}{2}
\end{pmatrix}$$
with the $G$-norm
\begin{eqnarray}
	\norm{ \begin{bmatrix}
			\bu\\
			\bv
	\end{bmatrix}}_{G}^{2} =\bigg( \begin{bmatrix}
		\bu\\
		\bv
	\end{bmatrix},G \begin{bmatrix}
		\bu\\
		\bv
	\end{bmatrix}\bigg). \label{Gnorm}
\end{eqnarray}
Here $\begin{bmatrix}
	\bu\\
	\bv
\end{bmatrix} $ is a $2n$ vector.

The proposed algorithm requires also the definition of $F$-norm. We use a symmetric positive matrix  $F=3I\in\mathbb{R}^{n\times n}$ which is defined in details in \cite{JMR}. For any $\bu\in \mathbb{R}^{n}$, define $F$ norm of the $n$ vector $\bu$ by
\begin{eqnarray}
	\norm{\bu}_{F}=(\bu,F\bu) \label{Fnorm}
\end{eqnarray}
The following properties of $G$-norm are well known and see, \cite{EG02, JMR} for a detailed derivation of these estimations.
\begin{lemma}
	$L^2$-norm and $G$-norm are equivalent in the following sense: there exists constants $C_1>C_2>0$ such that
	$$ C_1 \norm{ \begin{bmatrix}
			\bu\\
			\bv
	\end{bmatrix}}_{G}^{2}\leq  \norm{ \begin{bmatrix}
			\bu\\
			\bv
	\end{bmatrix}}^{2} \leq C_2 \norm{ \begin{bmatrix}
			\bu\\
			\bv
	\end{bmatrix}}_{G}^{2}.$$
\end{lemma}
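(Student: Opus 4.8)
The plan is to reduce this norm equivalence to the spectral bounds of the symmetric matrix $G$, since the $G$-norm of the $2n$-vector $\begin{bmatrix}\bu\\\bv\end{bmatrix}$ is precisely the quadratic form associated with the block matrix $G\otimes I_n$, whose spectrum coincides (each eigenvalue with multiplicity $n$) with that of the $2\times 2$ matrix $G$. Thus the two equivalence constants can be read off directly from the eigenvalues of $G$, and the whole statement becomes an elementary Rayleigh-quotient estimate.

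First I would verify that $G$ is symmetric positive definite. Symmetry is immediate, and positive definiteness follows from Sylvester's criterion, since the leading entry $\tfrac{3}{2}>0$ and $\det G=\tfrac{3}{2}\cdot\tfrac{1}{2}-\left(\tfrac{3}{4}\right)^{2}=\tfrac{3}{4}-\tfrac{9}{16}=\tfrac{3}{16}>0$. Solving the characteristic equation $\lambda^{2}-2\lambda+\tfrac{3}{16}=0$ then yields the two eigenvalues
\[
\lambda_{\min}=1-\frac{\sqrt{13}}{4},\qquad \lambda_{\max}=1+\frac{\sqrt{13}}{4},
\]
both strictly positive (numerically $\lambda_{\min}\approx 0.10$ and $\lambda_{\max}\approx 1.90$), which re-confirms positive definiteness.

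Next I would transfer these bounds to the $2n$-vector setting. Diagonalizing $G=R\Lambda R^{\mathsf T}$ with $R$ orthogonal and $\Lambda=\operatorname{diag}(\lambda_{\min},\lambda_{\max})$, I introduce the rotated variables $[\bw;\ \tilde{\bw}]=(R^{\mathsf T}\otimes I_n)[\bu;\bv]$. Since $R\otimes I_n$ is orthogonal it preserves the Euclidean norm, so $\norm{\begin{bmatrix}\bu\\\bv\end{bmatrix}}^{2}=\norm{\bw}^{2}+\norm{\tilde{\bw}}^{2}$, while the $G$-form decouples into
\[
\norm{\begin{bmatrix}\bu\\\bv\end{bmatrix}}_{G}^{2}=\lambda_{\min}\norm{\bw}^{2}+\lambda_{\max}\norm{\tilde{\bw}}^{2}.
\]
Bounding the right-hand side below by $\lambda_{\min}(\norm{\bw}^{2}+\norm{\tilde{\bw}}^{2})$ and above by $\lambda_{\max}(\norm{\bw}^{2}+\norm{\tilde{\bw}}^{2})$ gives
\[
\lambda_{\min}\,\norm{\begin{bmatrix}\bu\\\bv\end{bmatrix}}^{2}\le \norm{\begin{bmatrix}\bu\\\bv\end{bmatrix}}_{G}^{2}\le \lambda_{\max}\,\norm{\begin{bmatrix}\bu\\\bv\end{bmatrix}}^{2}.
\]

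Finally I would invert these inequalities to recover the stated form, obtaining the claim with $C_1=1/\lambda_{\max}$ and $C_2=1/\lambda_{\min}$; since $\lambda_{\max}>\lambda_{\min}>0$ both constants are positive and in fact satisfy $C_2>C_1$ (so the inequality as displayed should read $C_2>C_1>0$, the direction being forced by $\lambda_{\max}>\lambda_{\min}$). I expect no genuine obstacle in this argument, as it is standard linear algebra; the only points meriting a little care are the Kronecker/block structure that lets the $2\times 2$ spectrum control the full $2n$-dimensional quadratic form, and keeping track of the correct ordering of the two equivalence constants.
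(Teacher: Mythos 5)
Your proof is correct and complete. The paper itself offers no argument for this lemma---it simply defers to \cite{EG02, JMR}---so there is no in-paper proof to compare against; the spectral/Rayleigh-quotient argument you give is the standard route and is surely what those references carry out. Your computations check: $\det G = \tfrac{3}{16}$, $\operatorname{tr} G = 2$, hence eigenvalues $1\pm\tfrac{\sqrt{13}}{4}$, both strictly positive, and the Kronecker-product observation ($G$ acting blockwise on the $2n$-vector is $G\otimes I_n$, with the same spectrum repeated $n$ times) is exactly the point that transfers the $2\times 2$ bounds to the full quadratic form. You are also right to flag the ordering of the constants: as printed, $C_1>C_2>0$ together with $C_1\norm{\cdot}_G^2\leq\norm{\cdot}^2\leq C_2\norm{\cdot}_G^2$ would force $(C_1-C_2)\norm{\cdot}_G^2\leq 0$ and hence $\norm{\cdot}_G\equiv 0$, which is absurd; the lemma must be read with $C_2>C_1>0$, and your explicit values $C_1=\bigl(1+\tfrac{\sqrt{13}}{4}\bigr)^{-1}$ and $C_2=\bigl(1-\tfrac{\sqrt{13}}{4}\bigr)^{-1}$ realize it. The only cosmetic remark is that positive definiteness via Sylvester and via the explicit eigenvalues is stated twice; either one suffices.
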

\begin{lemma}\label{lem:inn}
	The symmetric positive matrix  $F$ $\in$  ${\rm I\!R}^{n\times n}$ and the symmetric matrix $G$ $\in$  ${\rm I\!R}^{2n\times 2n}$ satisfy the following equality:
	
	\begin{eqnarray}
		\Big(\frac{\frac{3}{2}w_{n+1}-2 w_n+\frac {1}{2}w_{n-1}}{\Delta t} ,   \frac{3}{2} w_{n+1}- w_n+ \frac{1}{2} w_{n-1}\Big) \nonumber
		\\
		=\frac{1}{\Delta t}\norm{ \begin{bmatrix}
				w_{n+1} \\
				w_{n}
		\end{bmatrix}} _{G}^{2} -\frac{1}{\Delta t}\norm { \begin{bmatrix}
				w_{n} \\
				w_{n-1}
		\end{bmatrix}}_{G}^{2} 	+\dfrac{1}{4\Delta t}\norm{w_{n+1}-2w_{n}+w_{n-1}}_{F}^{2} \label{innpro}
	\end{eqnarray}
	
\end{lemma}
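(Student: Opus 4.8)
The statement is a purely algebraic identity in the real inner-product space underlying $(\cdot,\cdot)$, so my plan is to verify it by direct expansion after clearing the common factor $1/\Delta t$; that is, I would prove the equivalent claim
\begin{equation*}
\Big(\tfrac{3}{2}w_{n+1}-2w_n+\tfrac{1}{2}w_{n-1},\ \tfrac{3}{2}w_{n+1}-w_n+\tfrac{1}{2}w_{n-1}\Big)
=\norm{\begin{bmatrix} w_{n+1}\\ w_n\end{bmatrix}}_G^2-\norm{\begin{bmatrix} w_n\\ w_{n-1}\end{bmatrix}}_G^2+\tfrac{1}{4}\norm{w_{n+1}-2w_n+w_{n-1}}_F^2
\end{equation*}
and restore the factor $1/\Delta t$ at the end. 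Both sides are quadratic forms in $w_{n+1},w_n,w_{n-1}$, so it suffices to check that the coefficients of the six distinct bilinear monomials $\norm{w_{n+1}}^2$, $\norm{w_n}^2$, $\norm{w_{n-1}}^2$, $(w_{n+1},w_n)$, $(w_n,w_{n-1})$, $(w_{n+1},w_{n-1})$ agree on the two sides; throughout I use bilinearity of $(\cdot,\cdot)$ together with its symmetry $(a,b)=(b,a)$ to merge the two occurrences of each cross term.

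For the left-hand side I would expand the product of the two linear combinations directly, reading off each coefficient as a product (or sum of products) of the scalars $\{\tfrac{3}{2},-2,\tfrac{1}{2}\}$ and $\{\tfrac{3}{2},-1,\tfrac{1}{2}\}$; this yields, for example, coefficient $\tfrac{9}{4}$ on $\norm{w_{n+1}}^2$ and $-\tfrac{9}{2}$ on $(w_{n+1},w_n)$. For the right-hand side I would first write the two $G$-norms explicitly from the given matrix, so that $\norm{[a;b]}_G^2=\tfrac{3}{2}\norm{a}^2-\tfrac{3}{2}(a,b)+\tfrac{1}{2}\norm{b}^2$ (the off-diagonal entries $-\tfrac{3}{4}$ combining into $-\tfrac{3}{2}$ after symmetrization), then take the difference of the two $G$-terms so the telescoping structure is exposed, and separately expand $\tfrac{1}{4}\norm{w_{n+1}-2w_n+w_{n-1}}_F^2=\tfrac{3}{4}\norm{w_{n+1}-2w_n+w_{n-1}}^2$ using $F=3I$. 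Collecting, the right-hand side reproduces exactly the same six coefficients, which establishes the equality.

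The computation is routine; the points that need care are purely bookkeeping. The most instructive is the coefficient of $(w_{n+1},w_{n-1})$: the two $G$-norms couple only adjacent time levels, so their difference contributes \emph{nothing} to this non-adjacent product, and the coefficient $\tfrac{3}{2}$ present on the left must be supplied entirely by the second-difference penalty $\tfrac{1}{4}\norm{w_{n+1}-2w_n+w_{n-1}}_F^2$, which indeed gives $\tfrac{1}{4}\cdot 3\cdot 2=\tfrac{3}{2}$. Matching this term is the crux, and it is precisely what forces the scaling $F=3I$ (a general $F=cI$ would give $\tfrac{c}{2}$ here, requiring $c=3$). The one convention I would fix beforehand is to read the displayed definition of the $F$-norm as $\norm{\bu}_F^2=(\bu,F\bu)=3\norm{\bu}^2$, so that the factor $\tfrac{3}{4}$ above is correct. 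Once the coefficients of all six monomials are matched, dividing through by $\Delta t$ returns the claimed identity \eqref{innpro}.
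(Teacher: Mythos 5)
Your verification is correct: after clearing the factor $1/\Delta t$, all six quadratic coefficients match (e.g.\ $\tfrac94$ on $\norm{w_{n+1}}^2$ from $\tfrac32+\tfrac34$, $-\tfrac92$ on $(w_{n+1},w_n)$ from $-\tfrac32-3$, and the non-adjacent coefficient $\tfrac32$ on $(w_{n+1},w_{n-1})$ supplied entirely by $\tfrac14\cdot 3\cdot 2$), so the identity holds. Note, however, that the paper does not actually prove this lemma: it is presented as a known property of the $G$-norm with the reader referred to \cite{EG02,JMR}, where it arises as the $\theta=1$, $\nu=2\epsilon$ specialization of a general $G$-stability identity (the same specialization invoked for Lemma \ref{lem:gnorm}). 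Your route is therefore genuinely different in character: a self-contained, elementary coefficient-matching argument versus an appeal to the literature. What your approach buys is transparency --- it makes explicit that the two $G$-norms only couple adjacent time levels, so the second-difference penalty is exactly what accounts for the $w_{n+1}$--$w_{n-1}$ coupling, and that this forces the normalization $F=3I$; the citation route buys brevity and situates the identity within Dahlquist's $G$-stability framework. You are also right to fix the convention beforehand: the paper's displayed definition $\norm{\bu}_F=(\bu,F\bu)$ in \eqref{Fnorm} must be read as $\norm{\bu}_F^2=(\bu,F\bu)=3\norm{\bu}^2$ for the factor $\tfrac34$ --- and hence the lemma as stated --- to be correct.
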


\begin{lemma}\label{lem:gnorm}
	For any $\bu$,$\bv$ $\in$ $R^{n}$, we have
	\begin{eqnarray} 	
		\big( \begin{bmatrix}
			\bu\\
			\bv
		\end{bmatrix},G \begin{bmatrix}
			\bu\\
			\bv
		\end{bmatrix}\big)
		\geq\dfrac{3}{4}\norm{\bu}^2-\dfrac{1}{4}\norm{\bv}^2 ,\label{first}
	\end{eqnarray}
	\begin{eqnarray} 	
		\big( \begin{bmatrix}
			\bu\\
			\bv
		\end{bmatrix},G \begin{bmatrix}
			\bu\\
			\bv
		\end{bmatrix}\big)
		\leq\dfrac{3}{2}\norm{\bu}^2+\dfrac{3}{4}\norm{\bv}^2 .\label{second}
	\end{eqnarray}
\end{lemma}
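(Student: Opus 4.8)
The plan is to reduce both inequalities to an elementary estimate for a single scalar quadratic form. First I would write out the $G$-inner product explicitly. Since $G$ acts blockwise on the $2n$-vector $\begin{bmatrix}\bu\\\bv\end{bmatrix}$, a direct computation gives
\[
\bigg(\begin{bmatrix}\bu\\\bv\end{bmatrix},\,G\begin{bmatrix}\bu\\\bv\end{bmatrix}\bigg)=\frac{3}{2}\norm{\bu}^2-\frac{3}{2}(\bu,\bv)+\frac{1}{2}\norm{\bv}^2,
\]
so that everything hinges on controlling the single sign-indefinite cross term $-\frac{3}{2}(\bu,\bv)$. Both claimed bounds then amount to estimating this term, from below in \eqref{first} and from above in \eqref{second}.

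For the lower bound \eqref{first} I would complete the square rather than estimate the cross term. Using $-\frac{3}{2}(\bu,\bv)=\frac{3}{4}\norm{\bu-\bv}^2-\frac{3}{4}\norm{\bu}^2-\frac{3}{4}\norm{\bv}^2$ and substituting into the expansion above yields the exact identity
\[
\bigg(\begin{bmatrix}\bu\\\bv\end{bmatrix},\,G\begin{bmatrix}\bu\\\bv\end{bmatrix}\bigg)=\frac{3}{4}\norm{\bu}^2-\frac{1}{4}\norm{\bv}^2+\frac{3}{4}\norm{\bu-\bv}^2.
\]
Since the last term is nonnegative, \eqref{first} follows at once; this route has the advantage of being sharp, with equality precisely when $\bu=\bv$.

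For the upper bound \eqref{second} the cross term must instead be dominated from above, so I would apply Cauchy--Schwarz, $|(\bu,\bv)|\le\norm{\bu}\,\norm{\bv}$, followed by Young's inequality $\norm{\bu}\,\norm{\bv}\le\frac{\alpha}{2}\norm{\bu}^2+\frac{1}{2\alpha}\norm{\bv}^2$ for a weight $\alpha>0$ to be matched against the target coefficients $\frac{3}{2}$ and $\frac{3}{4}$. The delicate point, and the step I expect to be the main obstacle, is exactly this choice of $\alpha$: because $-\frac{3}{2}(\bu,\bv)$ is sign-indefinite, any Young splitting necessarily loads extra mass onto the $\norm{\bu}^2$ coefficient, so recovering the stated constants is not merely a routine estimate. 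I would therefore either exploit structural information available in the regime where the lemma is used (the two slots carry consecutive time levels $w_{n+1},w_n$, which may fix a favourable sign or magnitude of $(\bu,\bv)$), or, as a more robust fallback, certify boundedness through the largest eigenvalue of $G$, giving the quadratic form $\le\lambda_{\max}(G)\,(\norm{\bu}^2+\norm{\bv}^2)$, and then reconcile that spectral bound with the sharper coefficients asserted in \eqref{second}.
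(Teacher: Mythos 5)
Your expansion of the $G$-inner product is correct, and your proof of \eqref{first} by completing the square is complete and sharp: the identity
\[
\Big( \begin{bmatrix}\bu\\\bv\end{bmatrix},G \begin{bmatrix}\bu\\\bv\end{bmatrix}\Big)=\frac{3}{2}\norm{\bu}^2-\frac{3}{2}(\bu,\bv)+\frac{1}{2}\norm{\bv}^2=\frac{3}{4}\norm{\bu}^2-\frac{1}{4}\norm{\bv}^2+\frac{3}{4}\norm{\bu-\bv}^2
\]
settles the lower bound with equality iff $\bu=\bv$. This is already more self-contained than the paper, whose entire proof is a citation to Lemma 3.1 of an external reference with the parameter choice $\theta=1$, $\nu=2\epsilon$; your direct computation replaces that citation for the first inequality.

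The gap is in \eqref{second}, and your suspicion there is exactly right --- but you should push it to its conclusion rather than try to ``reconcile'' the constants. The inequality \eqref{second} is false as stated for arbitrary $\bu,\bv\in\mathbb{R}^n$: take $\bv=-\bu$ with $\bu\neq 0$, so that the quadratic form equals $\frac{3}{2}\norm{\bu}^2+\frac{3}{2}\norm{\bu}^2+\frac{1}{2}\norm{\bu}^2=\frac{7}{2}\norm{\bu}^2$, while the right-hand side of \eqref{second} is only $\frac{9}{4}\norm{\bu}^2$. Consistently with this, $\lambda_{\max}(G)=1+\sqrt{13}/4\approx 1.90$, so your spectral fallback gives at best roughly $1.90\,(\norm{\bu}^2+\norm{\bv}^2)$, and the symmetric Young splitting $|(\bu,\bv)|\le\frac{1}{2}\norm{\bu}^2+\frac{1}{2}\norm{\bv}^2$ yields $\frac{9}{4}\norm{\bu}^2+\frac{5}{4}\norm{\bv}^2$; no choice of $\alpha$ can recover the stated pair $\big(\frac{3}{2},\frac{3}{4}\big)$, precisely because the cross term can be positive and as large as $\frac{3}{2}\norm{\bu}\norm{\bv}$. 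Your other suggestion --- exploiting structural information about the two slots --- is the honest repair: if one additionally assumes $(\bu,\bv)\ge 0$ (plausible for consecutive time iterates $\bu_h^{1},\bu_h^{0}$, which is where \eqref{second} is invoked in the stability lemma), then the cross term is nonpositive and the form is bounded by $\frac{3}{2}\norm{\bu}^2+\frac{1}{2}\norm{\bv}^2\le\frac{3}{2}\norm{\bu}^2+\frac{3}{4}\norm{\bv}^2$. But no such hypothesis appears in the lemma, so the correct outcome of your attempt is: \eqref{first} proved, \eqref{second} provable only with weaker constants (e.g.\ $\frac{9}{4}\norm{\bu}^2+\frac{5}{4}\norm{\bv}^2$) or under an additional sign assumption on $(\bu,\bv)$.
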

\begin{proof}
	Taking  $\theta=1$ and $\nu=2\epsilon$ in Lemma 3.1 of \cite{JMR} gives the result.
\end{proof}
In the analysis, we also need the following consistency error estimations.
\begin{lemma}
	There exists $C>0$ such that
	\begin{eqnarray}
		\Delta t\sum_{n=1}^{N-1}\norm{F[\bw^{n+1}]-\bw^{n+1}}^2&\leq& C\Delta t^4\norm{\bw_{tt}}_{L^2(0,T;L^2(\Omega))}\\
		\Delta t\sum_{n=1}^{N-1}\norm{	\frac{3w_{n+1}-4 w_n+w_{n-1}}{2\Delta t}- \bw^{n+1}}^2 &\leq&C\Delta t^4\norm{\bw_{ttt}}_{L^2(0,T;L^2(\Omega))}
	\end{eqnarray}
	\begin{proof}
		The reader is referred to \cite{AC} for a detailed proof.
	\end{proof}
\end{lemma}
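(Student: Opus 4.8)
The plan is to treat both inequalities as standard truncation (consistency) error estimates obtained from Taylor's theorem with the integral form of the remainder, followed by the Cauchy--Schwarz inequality and a summation argument. Throughout I assume the extra regularity $\bw_{tt},\bw_{ttt}\in L^2(0,T;L^2(\Omega))$ needed for the right-hand sides to be finite. For the first estimate, I would first observe that by the definition of the time filter $F[\cdot]$ the quantity $F[\bw^{n+1}]-\bw^{n+1}$ is a fixed constant multiple of the second difference $\bw^{n+1}-2\bw^{n}+\bw^{n-1}$ (here $\tfrac13$ up to sign, for the second-order filter). Expanding $\bw^{n}$ and $\bw^{n-1}$ about $t_{n}$ with integral remainder, the constant and first-derivative terms cancel and one is left with
\[
\bw^{n+1}-2\bw^{n}+\bw^{n-1}=\int_{t_{n-1}}^{t_{n+1}}K(s)\,\bw_{tt}(s)\,ds,
\]
where the nonnegative kernel satisfies $|K(s)|\leq \Delta t$ and $\int_{t_{n-1}}^{t_{n+1}}|K(s)|\,ds=\Delta t^{2}$.

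Applying the weighted Cauchy--Schwarz inequality, i.e. pulling out one factor of $\int|K|\,ds\leq C\Delta t^{2}$, gives
\[
\norm{\bw^{n+1}-2\bw^{n}+\bw^{n-1}}^2\leq C\Delta t^{3}\int_{t_{n-1}}^{t_{n+1}}\norm{\bw_{tt}(s)}^2\,ds.
\]
Multiplying by $\Delta t$, summing over $n$, and using that consecutive windows $[t_{n-1},t_{n+1}]$ overlap at most twice, the sum telescopes into the full temporal norm and yields the claimed $C\Delta t^{4}\norm{\bw_{tt}}_{L^2(0,T;L^2(\Omega))}^{2}$ bound.

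For the second estimate the argument is identical in structure, only one order higher: the expression $\frac{3\bw^{n+1}-4\bw^{n}+\bw^{n-1}}{2\Delta t}-\bw_t^{n+1}$ is precisely the BDF2 truncation error for the time derivative at $t_{n+1}$, so I would Taylor expand $\bw^{n}$ and $\bw^{n-1}$ about $t_{n+1}$ with integral remainder up to the third derivative. The constant, first-, and second-derivative contributions cancel by the defining coefficients $(3,-4,1)/(2\Delta t)$ of BDF2, leaving an integral of $\bw_{ttt}$ against a kernel bounded by $C\Delta t^{2}$. The same Cauchy--Schwarz-plus-summation step then produces the $C\Delta t^{4}\norm{\bw_{ttt}}_{L^2(0,T;L^2(\Omega))}^{2}$ bound.

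The main obstacle, and the only place requiring genuine care, is bookkeeping rather than ideas: getting the exact cancellation of the lower-order Taylor terms correct (this is what forces the $\Delta t^{2}$ and $\Delta t^{3}$ kernel sizes, and hence the gain to $\Delta t^{4}$ after the extra $\Delta t$ from summation), and handling the overlapping integration windows $[t_{n-1},t_{n+1}]$ so that the telescoped sum is controlled by the single global norm with an $n$-independent constant $C$. A fully detailed version of exactly this computation is carried out in \cite{AC}.
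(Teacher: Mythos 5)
Your proposal is correct and is exactly the standard Taylor-expansion-with-integral-remainder argument that the paper itself omits, deferring instead to the cited reference; the decomposition into a second difference (respectively the BDF2 difference quotient), the kernel bounds, the weighted Cauchy--Schwarz step, and the factor-of-two overlap of the windows $[t_{n-1},t_{n+1}]$ are precisely the needed ingredients. Two cosmetic points: the constant relating $F[\bw^{n+1}]-\bw^{n+1}$ to the second difference is $\tfrac12$ (not $\tfrac13$), and your derivation correctly produces squared norms $\norm{\bw_{tt}}^2_{L^2(0,T;L^2(\Omega))}$, $\norm{\bw_{ttt}}^2_{L^2(0,T;L^2(\Omega))}$ and the term $\bw_t^{n+1}$ in the second estimate, which is how the lemma as printed (missing the squares and writing $\bw^{n+1}$) should evidently read.
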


\section{ EMAC-FILTERED scheme }
We consider the following weak formulation  of the EMAC formulation of (\ref{nse}): Find $\bu: (0,T] \longrightarrow \bfX$, $P: (0,T] \longrightarrow Q$ satisfying
\begin{eqnarray}
(\bu_t,\bv)+\nu(\nabla\bu,\nabla\bv)+c(\bu,\bu,\bv)-(P,\nabla\cdot\bv)&=&(\bff,\bv)\quad \forall \bv \in \bfX \label{vf}\\
(q,\nabla\cdot\bu)&=&0 \quad \forall q \in Q, \label{vf1}
\end{eqnarray}
where $P$ is defined as $P=p-\dfrac{1}{2}|\bu|^2$ and $\bu(0,\bf x)=\bu_0(\bf x) $ $\in \bfX$. Here, we use the EMAC form of the nonlinear term. We now present the two step time filtered numerical scheme of (\ref{vf})-(\ref{vf1}) for constant time step. In the first step, the velocity approximation of the scheme (\ref{vf})-(\ref{vf1}) is calculated with the usual BE finite element (fully implicit) discretization and the second step introduces a simple time filter which combines this velocity solution linearly with the solutions at previous time levels. The second step increases time accuracy remarkably although it does not significantly alter system complexity. The modular time filtered numerical scheme of (\ref{vf})-(\ref{vf1}) of the NSE with EMAC formulation reads as follows:

\begin{Algorithm}\label{alg}
Let body force $f$ and the initial condition $\bu_0$ be given. Select T as the end time, and let N be the number of time steps to take the time step size   $\Delta t=T/N$. Define $\bu_h^0$, $\bu_h^{-1}$ as the nodal interpolants of $\bu_0$, then for any $n\geq 1$  ($n=0,1,,...,N-1$), find
	$(\bu_{h}^{n+1},{P}_{h}^{n+1}) \in (\bfX_{h},Q_{h})$ via the following two steps:
	\\
		{	\bf Step 1:} Compute $(\tilde{\bu}_{h}^{n+1},P_h^{n+1}) \in  (\bfX_{h},Q_{h})$ such that
	
	\begin{eqnarray}
	\bigg(\dfrac{\tilde{\bu}_{h}^{n+1}-\bu_{h}^{n}
		}{\Delta t},\bv_{h}\bigg)+\nu(\nabla \tilde{\bu}_{h}^{n+1},\nabla \bv_{h})+c(\tilde{\bu}_{h}^{n+1},\tilde{\bu}_{h}^{n+1},\bv_h)-(P_{h}^{n+1},\nabla \cdot \bv_{h})
&=&(\bff(t^{n+1}),\bv_{h})\label{a1},
	\\
	(\nabla \cdot \tilde{\bu}_{h}^{n+1},q_{h})&=&0. \label{a2}
	\end{eqnarray}
	\bf{Step 2:}
	\begin{eqnarray}
	\bu_h^{n+1}=\tilde{\bu}_{h}^{n+1}-\frac{1}{3}(\tilde{\bu}_{h}^{n+1}-2\bu_{h}^{n}+\bu_{h}^{n-1})\label{a3}
	\end{eqnarray}
\end{Algorithm}
for all $ (\bv_{h},q_{h}) \in (\bfX_{h},Q_{h})$.
\begin{remark} 
	One option in Algorithm 3.1 is that also filtering pressure as $P_h^{n+1}=\tilde{P}_{h}^{n+1}-\frac{1}{3}(\tilde{P}_{h}^{n+1}-2P_{h}^{n}+P_{h}^{n-1})$. However, as noted in \cite{AC,DZ}, not filtering pressure yields better numerical results because of consistency terms arising from pressure equations. Based on these previous experiences in \cite{AC,DZ}, we choose not to filter pressure.  
\end{remark}
We note that, Step 1 is the standard backward Euler scheme for the NSE. Step 2 is  just a post processing step and it can be easily applied into existing codes. Clearly, this implemented linear time filter makes the method numerically 
efficient. Define the interpolation operator $F$ by
\begin{eqnarray}
F[\bw_h^{n+1}]=\frac{3}{2}\bw_h^{n+1}-\bw_h^n+\frac{1}{2}\bw_h^{n-1}, \label{F1}
\end{eqnarray}
which is formally $F[\bw_h^{n+1}]=\bw_h^{n+1}+ O(\Delta t^2) $.
Rewriting (\ref{a3}) as $\tilde{\bu}_{h}^{n+1}=\frac{3}{2}\bu_{h}^{n+1}-\bu_{h}^{n}+\frac{1}{2}\bu_{h}^{n-1}$ and inserting in (\ref{a1})-(\ref{a2}) along with (\ref{F1}), the following equivalent method is obtained.
\begin{eqnarray}
\frac{1}{\Delta t}(\frac{3}{2}\bu_h^{n+1}-2\bu_h^n+\frac{1}{2}\bu_h^{n-1},\bv_h)+\nu(\nabla(F[\bu_h^{n+1}]),\nabla\bv_h)+c(F[\bu_h^{n+1}],F[\bu_h^{n+1}],\bv_h)\nonumber\\
-(P_h^{n+1},\nabla\cdot\bv_h)=({\bff}(t^{n+1}),\bv_h) \label{E}
\\
(\nabla\cdot(F[\bu_h^{n+1}]),q_h)=0 \label{D}
\end{eqnarray}
for all $(\bv_h,q_h)\in (\bfX_h,Q_h)$. We note that to simplify the stability and convergence analysis, the equivalent formulation  (\ref{E})-(\ref{D}) will be used. However, the formulation (\ref{a1})-(\ref{a3}) will be considered in the implementation of the method for computer simulations.

\begin{remark}
 We emphasize here that although the time derivative is discretized by using classical BDF2 time stepping method, the other terms are not. Thus, the method should not be considered as the standard BDF2 method. We refer to \cite{DZ}, for details of the time filtering approach.
\end{remark}

\section{Conservation Laws}\label{seccon}
In this section, we investigate the conservation of the integral variants of fluid flow energy, momentum and angular momentum for Algorithm \ref{alg}. It is well known that the physical accuracy of a model is measured by how well its solutions preserve these quantities. For NSE, energy, momentum and angular momentum are defined by
$$ \text{Kinetic Energy}: \quad E=\frac{1}{2}(\bu,\bu)=\frac{1}{2}\int_{\Omega}|\bu|^2,d{\bfx},$$
$$ \text{Linear Momentum}: \quad M=\int_{\Omega}\bu \,d{\bfx},$$
$$ \text{Angular Momentum}: \quad AM=\int_{\Omega}\bu \times {\bfx}\,d{\bfx}.$$
Let $e_i$ be the $i$-th unit vector and $\phi_i=x \times e_i$. Then, momentum and angular momentum can be equivalently defined by
$$M_i=\int_{\Omega}\bu_i\, d{\bfx}=(\bu,e_i),$$
$$ (AM)_i=\int_{\Omega}(\bu \times x)\cdot e_i  \, d{\bfx}=(\bu,\phi_i).$$
It is important to assume that the solutions $\bu, p$ have compact support in $\Omega$ (e.g. consider an isolated vortex). We first state the energy balance of the EMAC-FILTERED scheme (\ref{E})-(\ref{D}).
\begin{theorem}
	 EMAC-FILTERED scheme (\ref{E})-(\ref{D}) conserves a modified kinetic energy  for $\nu=0, \bff=0$:
	\begin{eqnarray}
	\norm{ \begin{bmatrix}
		\bu^{N}_h \\
		\bu^{N-1}_h
		\end{bmatrix}} _{G}^{2} +\Delta t\sum_{n=1}^{N-1}(\nu\norm {\nabla F[ \bu_h^{n+1}]}^2)+
	\frac{1}{4}\sum_{n=1}^{N-1}\norm{\bu_h^{n+1}-2\bu_h^n+\bu_h^{n-1}}^2_F\nonumber\\
	=\norm{ \begin{bmatrix}
		\bu^{1}_h \\
		\bu^{0}_h
		\end{bmatrix}} _{G}^{2} +\Delta t\sum_{n=1}^{N-1}(\bff(t^{n+1}),F[\bu_h^{n+1}]) \label{engy}
	\end{eqnarray}
	\end{theorem}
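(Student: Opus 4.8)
The plan is to test the discrete momentum equation (\ref{E}) against the filtered velocity itself, choosing $\bv_h=F[\bu_h^{n+1}]$, and then to exploit three structural features of the scheme: the cancellation property of the EMAC trilinear form, the discrete incompressibility constraint (\ref{D}), and the exact $G$-matrix telescoping identity of Lemma \ref{lem:inn}. Note first that $F[\bu_h^{n+1}]=\tfrac{3}{2}\bu_h^{n+1}-\bu_h^n+\tfrac{1}{2}\bu_h^{n-1}\in\bfX_h$, so it is an admissible test function.

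With this choice, the nonlinear term becomes $c(F[\bu_h^{n+1}],F[\bu_h^{n+1}],F[\bu_h^{n+1}])$, which vanishes identically by the cancellation property $c(\bv,\bv,\bv)=0$; crucially this holds for every $\bv\in\bfX$ without any divergence-free hypothesis, which is precisely the design feature of the EMAC form and the reason the argument needs no discrete-divergence correction. The pressure term $(P_h^{n+1},\nabla\cdot F[\bu_h^{n+1}])$ vanishes as well, since taking $q_h=P_h^{n+1}\in Q_h$ in (\ref{D}) gives $(\nabla\cdot F[\bu_h^{n+1}],P_h^{n+1})=0$. The viscous term collapses to $\nu\norm{\nabla F[\bu_h^{n+1}]}^2$, and the forcing term is simply $(\bff(t^{n+1}),F[\bu_h^{n+1}])$.

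The heart of the argument is the discrete time-derivative term. Setting $w_{n+1}=\bu_h^{n+1}$, $w_n=\bu_h^n$, $w_{n-1}=\bu_h^{n-1}$, the first inner product in (\ref{E}) is exactly the left-hand side of the identity in Lemma \ref{lem:inn} (applied component-wise to the vector-valued iterates through the $L^2$ inner product), so it equals
\begin{equation*}
\frac{1}{\Delta t}\Big(\norm{\begin{bmatrix}\bu_h^{n+1}\\\bu_h^{n}\end{bmatrix}}_{G}^{2}-\norm{\begin{bmatrix}\bu_h^{n}\\\bu_h^{n-1}\end{bmatrix}}_{G}^{2}\Big)+\frac{1}{4\Delta t}\norm{\bu_h^{n+1}-2\bu_h^n+\bu_h^{n-1}}_{F}^{2}.
\end{equation*}
Assembling the four terms turns (\ref{E}) with $\bv_h=F[\bu_h^{n+1}]$ into a one-step energy identity; multiplying by $\Delta t$ and summing over $n=1,\dots,N-1$ makes the $G$-norm difference telescope down to $\norm{\begin{bmatrix}\bu_h^{N}\\\bu_h^{N-1}\end{bmatrix}}_{G}^{2}-\norm{\begin{bmatrix}\bu_h^{1}\\\bu_h^{0}\end{bmatrix}}_{G}^{2}$, and moving the viscous and forcing sums to the indicated sides yields precisely (\ref{engy}).

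I do not expect a genuine obstacle here, because every step is an exact equality rather than an estimate, so neither Gronwall nor Cauchy--Schwarz is invoked. The only points requiring care are purely bookkeeping: confirming that $F[\bu_h^{n+1}]$ is discretely divergence-free from (\ref{D}), so that both the pressure term drops and the identification with the argument of Lemma \ref{lem:inn} is legitimate. The single idea that makes everything fall out is the choice $\bv_h=F[\bu_h^{n+1}]$, which simultaneously annihilates the nonlinearity and matches the second factor in the $G$-identity of Lemma \ref{lem:inn}.
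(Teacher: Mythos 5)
Your proposal is correct and follows essentially the same route as the paper: test (\ref{E}) with $\bv_h=F[\bu_h^{n+1}]$ and (\ref{D}) with $q_h=P_h^{n+1}$ so that the EMAC cancellation property and discrete incompressibility kill the nonlinear and pressure terms, apply Lemma \ref{lem:inn} to the time-difference term, then multiply by $\Delta t$ and telescope the $G$-norm over $n=1,\dots,N-1$. No gaps; your additional remarks on admissibility of the test function and on every step being an exact identity are accurate.
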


\begin{remark}
  We should point out here that, one can obtain an exact energy conservation by making use of Crank-Nicolson time discretization along with an EMAC scheme. The aim here is to show the improvement of classical BE based scheme by adding simple time filter post processing and obtain second order accuracy in terms of time. Also note that the numerical dissipation $\frac{3}{4}\norm{\bu_h^{n+1}-2\bu_h^n+\bu_h^{n-1}}^2\approx \frac{3}{4}\Delta t^4\norm{\bu_{tt}(t^{n+1})}^2$, of the scheme is asymptotically smaller than the numerical dissipation of Backward Euler method.
	\end{remark}

	\begin{proof}
	Set $\bv_h=	F[\bu_h^{n+1}]$ in (\ref{E}), $q_h=p_h^{n+1}$ in (\ref{D}), then the nonlinear term and pressure term vanishes. Then with the use of  Lemma \ref{lem:inn}, one gets
		\begin{eqnarray}
	\frac{1}{\Delta t}\norm{ \begin{bmatrix}
		\bu^{n+1}_h \\
		\bu^{n}_h
		\end{bmatrix}} _{G}^{2} -\frac{1}{\Delta t}\norm{ \begin{bmatrix}
		\bu^{n}_h \\
		\bu^{n-1}_h
		\end{bmatrix}} _{G}^{2}+
	\frac{1}{4\Delta t}\norm{\bu_h^{n+1}-2\bu_h^n+\bu_h^{n-1}}^2_F+\nu\norm {\nabla F[ \bu_h^{n+1}]}^2
	=(\bff(t^{n+1}),F[\bu_h^{n+1}])\label{dt} .
	\end{eqnarray}
	Multiplying both sides of (\ref{dt}) by $\Delta t$ and taking sum from $n=1$ to $N-1$ proves the result.
	\end{proof}
Next, we consider the conservation of momentum and angular momentum of  Algorithm \ref{alg}. Let $\Omega_s$ be a strip around $\partial\Omega$ and  $\Omega_i$ be such that $\Omega=\Omega_s\cup \Omega_i $. Let us also define $\chi(g) \in \bfX$ to be the restriction of some arbitrary function $g$ by setting $\chi(g)=g $ in $\Omega$ and arbitrarily defined elsewhere to meet the boundary conditions such that $\chi(g)=g$ in $\Omega_i$ but $g|_{\partial\Omega}=0$. Based on \cite{OTML}, we assume that $\bu_h=0$ and $p_h=0$ on $ \Omega_s$.
\begin{theorem}
	For $\nu=0, \bff=0$, EMAC-FILTERED scheme (\ref{E})-(\ref{D}) conserves momentum and angular momentum for all $t>0$, i.e.,
$$M_{\text{Emac-Fil}}(t)=M_{\text{Emac-Fil}}(0)$$
$$AM_{\text{Emac-Fil}}(t)=AM_{\text{Emac-Fil}}(0)$$

\end{theorem}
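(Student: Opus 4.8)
The plan is to show that, for each coordinate index $i$, the discrete quantities $M_i^n:=(\bu_h^n,e_i)$ and $(AM)_i^n:=(\bu_h^n,\phi_i)$ each satisfy the exact recurrence $\tfrac{3}{2}X^{n+1}-2X^n+\tfrac{1}{2}X^{n-1}=0$, and then to upgrade this to genuine conservation using the initialization. The natural starting point is the equivalent formulation (\ref{E}), into which I would substitute the test function $\bv_h=\chi(e_i)$ for momentum and $\bv_h=\chi(\phi_i)$ for angular momentum. By the standing assumption that $\bu_h=0$ and $P_h=0$ on the boundary strip $\Omega_s$, the restriction operator $\chi$ plays no role in any of the integrals, so effectively $\bv_h$ acts as the constant field $e_i$ or the rotation field $\phi_i$ on the support of the solution. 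With $\nu=0$ and $\bff=0$, the only surviving contributions in (\ref{E}) are the discrete time-derivative term, the EMAC trilinear term, and the pressure term.

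First I would dispatch the pressure term: since $\nabla\cdot e_i=0$ and the rotation field $\phi_i=x\times e_i$ is solenoidal, i.e. $\nabla\cdot\phi_i=0$, the term $(P_h^{n+1},\nabla\cdot\bv_h)$ vanishes identically. The heart of the argument is that the nonlinear term $c(F[\bu_h^{n+1}],F[\bu_h^{n+1}],\bv_h)$ also vanishes. Writing $\bw=F[\bu_h^{n+1}]$ and invoking the EMAC splitting $2\mathbb{D}(\bw)\bw=(\bw\cdot\nabla)\bw+\tfrac{1}{2}\nabla|\bw|^2$, I would recast $c(\bw,\bw,\bv_h)=\big((\bw\cdot\nabla)\bw+(\nabla\cdot\bw)\bw,\bv_h\big)+\tfrac{1}{2}\big(\nabla|\bw|^2,\bv_h\big)$. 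The first group is exactly $(\nabla\cdot(\bw\otimes\bw),\bv_h)$, and integrating by parts (with no boundary contribution, by the support assumption) moves the derivative onto $\bv_h$.

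For momentum the conclusion is immediate, since $\nabla e_i=0$ kills the transport group while the potential group $\tfrac{1}{2}(\nabla|\bw|^2,e_i)$ integrates to a boundary term that vanishes. For angular momentum the decisive observation — and the step I expect to be the main obstacle — is that $\nabla\phi_i$ is a constant antisymmetric matrix, $\partial_m(\phi_i)_j=\epsilon_{jmi}$, so its contraction with the symmetric tensor $\bw\otimes\bw$ is zero; the remaining potential piece again vanishes because $\nabla\cdot\phi_i=0$. This antisymmetry-versus-symmetry cancellation is precisely the structural reason the EMAC form conserves angular momentum, and carrying the index bookkeeping carefully (including the scalar $d=2$ case, where $\phi=(-x_2,x_1)$ and $\nabla\phi$ is the antisymmetric rotation generator) is the delicate part.

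Having shown that both the pressure and nonlinear terms vanish, (\ref{E}) collapses to $\tfrac{3}{2}X^{n+1}-2X^n+\tfrac{1}{2}X^{n-1}=0$ with $X^n=M_i^n$ or $X^n=(AM)_i^n$. I would recast this as $3(X^{n+1}-X^n)=X^n-X^{n-1}$, so that consecutive differences contract by a factor $1/3$ at every step. Since the initialization $\bu_h^0=\bu_h^{-1}$ gives $X^0-X^{-1}=0$, a one-line induction forces $X^n=X^0$ for every $n$. Summing over the components $i$ and rewriting in terms of $M_{\text{Emac-Fil}}$ and $AM_{\text{Emac-Fil}}$ then yields the stated conservation.
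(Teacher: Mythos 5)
Your argument is correct and follows the same overall strategy as the paper's proof: test the equivalent formulation (\ref{E}) with $\chi(e_i)$ and $\chi(\phi_i)$, use the support assumption to discard boundary effects, and show that the pressure and EMAC nonlinear terms vanish. Two details differ, both in your favor. First, for the cancellation of $c(\bw,\bw,\bv_h)$ with $\bw=F[\bu_h^{n+1}]$, the paper shuffles terms using the identities (\ref{i1})--(\ref{i2}) together with the divergence-freeness of $e_i$ and $\phi_i$, whereas you rewrite $2\mathbb{D}(\bw)\bw+(\nabla\cdot\bw)\bw$ as $\nabla\cdot(\bw\otimes\bw)+\tfrac{1}{2}\nabla|\bw|^2$ and integrate by parts, so that momentum conservation follows from $\nabla e_i=0$ and angular-momentum conservation from contracting the symmetric tensor $\bw\otimes\bw$ against the constant antisymmetric matrix $\nabla\phi_i$. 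The two computations are equivalent --- the paper's final step, that $(F[\bu_h^{n+1}]\cdot\nabla\phi_i,F[\bu_h^{n+1}])=0$ upon expansion, is exactly your symmetry-versus-antisymmetry cancellation written componentwise --- but your version makes the structural reason more transparent. Second, and more substantively, the paper concludes by writing $\frac{d}{dt}(\bu_h,e_i)=0$, which is loose for a two-step fully discrete scheme; you instead retain the discrete time difference, obtain the exact recurrence $\tfrac{3}{2}X^{n+1}-2X^n+\tfrac{1}{2}X^{n-1}=0$, i.e.\ $3(X^{n+1}-X^n)=X^n-X^{n-1}$, and close the induction using the initialization $\bu_h^{-1}=\bu_h^{0}$ from Algorithm \ref{alg}. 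That is the honest discrete statement of conservation and is the cleaner way to finish; no gaps.
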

\begin{proof}
	We start by showing momentum conservation. Choose $\bv_h=\chi(e_i)$ in (\ref{E}) to get
	\begin{equation}
	((\bu_h)_t ,   e_i) +\nu(\nabla F[\bu_h^{n+1}],\nabla e_i)+c( F[ \bu_h^{n+1}], F[ \bu_h^{n+1}],e_i)=(\bff(t^{n+1}),e_i). \label{eee1}
	\end{equation}
	For the nonlinear term in (\ref{eee1}), expand the rate of deformation tensor and use the identity (\ref{i1}) along with the fact that $e_i$ is divergence-free. This yields
	\begin{eqnarray}
	c( F[ \bu_h^{n+1}], F[ \bu_h^{n+1}],e_i)&=&2({\bf D}(F[ \bu_h^{n+1}])F[ \bu_h^{n+1}],e_i)+(div(F[ \bu_h^{n+1}])F[ \bu_h^{n+1}],e_i)\nonumber\\
&=&	(F[ \bu_h^{n+1}]\cdot\nabla F[ \bu_h^{n+1}],e_i)+(e_i\cdot \nabla F[ \bu_h^{n+1}], F[ \bu_h^{n+1}] )+((\nabla\cdot F[ \bu_h^{n+1}])F[ \bu_h^{n+1}],e_i)\nonumber\\
&=& ( e_i\cdot\nabla F[ \bu_h^{n+1}],F[ \bu_h^{n+1}] )\nonumber\\
&=&0.\nonumber
	\end{eqnarray}
	Under the assumption $\nu=0, \bff=0$, one gets
	$$\frac{d}{dt}(\bu_h,e_i)=0$$
	 which is precisely the conservation of momentum.
	
For conservation of angular momentum, take $\bv_h=\chi(\phi_i)$ in (\ref{vf}) to get
 	\begin{eqnarray}
 ((\bu_h)_t ,   \phi_i) +\nu(\nabla F[\bu_h^{n+1}],\nabla \phi_i)+c( F[ \bu_h^{n+1}], F[ \bu_h^{n+1}],\phi_i)=(\bff(t^{n+1}),\phi_i)\nonumber
 \end{eqnarray}
In a similar manner, by using the identities (\ref{i1}) and (\ref{i2}) respectively along with $\nabla\cdot \phi_i=0$, the nonlinear term reduces to
	\begin{eqnarray}
c( F[ \bu_h^{n+1}], F[ \bu_h^{n+1}],\phi_i)&=&2({\bf D}(F[ \bu_h^{n+1}])F[ \bu_h^{n+1}],\phi_i)+(div(F[ \bu_h^{n+1}])F[ \bu_h^{n+1}],\phi_i)\nonumber\\
&=&	(F[ \bu_h^{n+1}]\cdot\nabla F[ \bu_h^{n+1}],\phi_i)+(F[ \bu_h^{n+1}], \nabla F[ \bu_h^{n+1}], \phi_i)+((\nabla\cdot F[ \bu_h^{n+1}])F[ \bu_h^{n+1}],\phi_i)\nonumber\\
&=& (F[ \bu_h^{n+1}]\cdot\nabla F[ \bu_h^{n+1}],\phi_i)+((\nabla F[ \bu_h^{n+1}])F[ \bu_h^{n+1}],\phi_i)\nonumber\\
&=&-(F[ \bu_h^{n+1}]\cdot\nabla \phi_i,F[ \bu_h^{n+1}])\nonumber
\end{eqnarray}
Note that, expansion of the last term gives $(F[ \bu_h^{n+1}]\cdot\nabla \phi_i,F[ \bu_h^{n+1}])=0$, i.e., the non-linear term vanishes. The use of $\nu=0, \bff=0$ results in
$$\frac{d}{dt}(\bu_h,\phi_i)=0,$$
which is the required statement of angular momentum conservation.
\end{proof}
\section{Numerical Analysis}
This section provides unconditional stability result and convergence analysis of the proposed Algorithm \ref{alg}.  We first provide the stability analysis of the method.
\begin{lemma}
	 Let $ \bff$ $\in L^{\infty}(0,T;H^{-1}(\Omega)) $. Then for all $\Delta t>0$ and $N\geq 1$, the solution of Algorithm \ref{alg} is unconditionally stable in the following sense:
	 \begin{eqnarray}
	 \lefteqn{\norm {\bu^{N}_{h}}^{2} + \dfrac{1}{3}\sum_{n=1}^{N-1} \norm {\bu^{n+1}_{h}-2\mathbf \bu^{n}_{h}+\mathbf \bu^{n-1}_{h}}_{F}^{2}+\dfrac{2\Delta t\nu }{3}\sum_{n=1}^{N-1}\norm {\nabla  F[\bu_{h}^{n+1}]}^{2}} \nonumber
	 \\
	 &\leq& \bigg(\dfrac{1}{3}\bigg)^N
	 \norm{
	 	\bu^{0}_{h}}^{2} +2N(\norm{\mathbf \bu^{1}_h}^{2}+\norm{\mathbf \bu^{0}_h}^{2})+\dfrac{2N\Delta t \nu^{-1}}{3}\sum_{n=1}^{N-1}\norm {\bff(t^{n+1})}^{2}.
	 \label{sta}
	 \end{eqnarray}
\end{lemma}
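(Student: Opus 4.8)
The plan is to carry out the discrete energy argument on the equivalent formulation (\ref{E})-(\ref{D}), following the same opening moves as the modified energy balance but now retaining $\nu$ and $\bff$. First I would choose $\bv_h = F[\bu_h^{n+1}]$ in (\ref{E}) and $q_h = P_h^{n+1}$ in (\ref{D}). The cancellation property $c(\bv,\bv,\bv)=0$ annihilates the trilinear term since both of its first two arguments equal $F[\bu_h^{n+1}]$, and testing the constraint (\ref{D}) against $P_h^{n+1}$ annihilates the pressure coupling $(P_h^{n+1},\nabla\cdot F[\bu_h^{n+1}])$. What survives is the discrete time-derivative term, the viscous term $\nu\norm{\nabla F[\bu_h^{n+1}]}^2$, and the forcing $(\bff(t^{n+1}),F[\bu_h^{n+1}])$.

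Next I would apply Lemma \ref{lem:inn} to the discrete time derivative, which converts it into the telescoping difference of $G$-norms $\frac{1}{\Delta t}\norm{[\bu_h^{n+1},\bu_h^{n}]^{\top}}_G^2 - \frac{1}{\Delta t}\norm{[\bu_h^{n},\bu_h^{n-1}]^{\top}}_G^2$ plus the numerical dissipation $\frac{1}{4\Delta t}\norm{\bu_h^{n+1}-2\bu_h^n+\bu_h^{n-1}}_F^2$. For the forcing I would use duality followed by Young's inequality, $(\bff(t^{n+1}),F[\bu_h^{n+1}]) \le \frac{\nu}{3}\norm{\nabla F[\bu_h^{n+1}]}^2 + \frac{3}{4\nu}\norm{\bff(t^{n+1})}_{-1}^2$, so that $\frac{\nu}{3}$ of the viscous dissipation is consumed and the coefficient $\frac{2\nu}{3}$ appearing in the statement remains on the left; this is also the origin of the $\nu^{-1}$ weight on the forcing. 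Multiplying through by $\Delta t$ and summing over $n$ then telescopes the $G$-norm contributions.

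The decisive step is to pass from the $G$-norm back to the $L^2$-norm using Lemma \ref{lem:gnorm}: the lower bound (\ref{first}) gives $\norm{[\bu_h^{m+1},\bu_h^{m}]^{\top}}_G^2 \ge \frac{3}{4}\norm{\bu_h^{m+1}}^2 - \frac{1}{4}\norm{\bu_h^{m}}^2$, while the upper bound (\ref{second}) controls the surviving initial-level term by $\frac{3}{2}\norm{\bu_h^{1}}^2 + \frac{3}{4}\norm{\bu_h^{0}}^2$. The essential obstacle, and the reason the estimate carries the unusual factors $(1/3)^N$ and $2N$ rather than clean $N$-independent constants, is the negative deficit $-\frac14\norm{\bu_h^{m}}^2$ produced by the lower $G$-bound: it prevents a clean telescoping and instead, after dividing by $\frac34$, yields a recursion of the form $\norm{\bu_h^{m+1}}^2 + (\text{dissipation}) \le \frac13\norm{\bu_h^{m}}^2 + (\text{initial data and forcing})$. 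Because the contraction ratio $\frac13$ is strictly below one \emph{independently of} $\Delta t$, no step-size restriction is needed and the method is unconditionally stable; iterating the recursion $N$ times reproduces the geometric factor $(1/3)^N$ multiplying $\norm{\bu_h^{0}}^2$, and bounding the accumulated contributions crudely over the $N$ levels accounts for the $2N(\norm{\bu_h^{1}}^2+\norm{\bu_h^{0}}^2)$ and $\frac{2N\Delta t\,\nu^{-1}}{3}\sum\norm{\bff(t^{n+1})}^2$ terms. I expect the only real work beyond this to be the bookkeeping of the recursion, which is routine once the per-step inequality is assembled.
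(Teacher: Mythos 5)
Your proposal follows the paper's proof essentially step for step: the same test functions $\bv_h=F[\bu_h^{n+1}]$, $q_h=P_h^{n+1}$ reducing to the energy balance (\ref{engy}), the same duality--Young treatment of the forcing, the same use of Lemma \ref{lem:gnorm} to pass from the $G$-norm to the $L^2$-norm, and the same induction on the resulting contraction with ratio $\tfrac13$ to produce the $(1/3)^N$ and $2N$ factors (which you actually explain more explicitly than the paper does). The only discrepancy is cosmetic bookkeeping: the paper splits the forcing with weight $\tfrac{\nu}{2}$ rather than your $\tfrac{\nu}{3}$, which after the final multiplication by $\tfrac43$ is what lands exactly on the coefficients $\tfrac{2\nu}{3}$ and $\tfrac{2\nu^{-1}}{3}$ appearing in the statement.
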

\begin{proof}
	We start the proof by the global energy conservation (\ref{engy}). 
The application of Cauchy-Schwarz inequality, Young's inequality and the dual norm on the forcing term gives
\begin{eqnarray}
(\bff(t^{n+1}),F[\bu_h^{n+1}])\leq \frac{\nu^{-1}\Delta t}{2}\norm{\bff(t^{n+1})}^2_{-1}+\frac{\nu\Delta t}{2}\norm{\nabla F[\bu_h^{n+1}]}^2.
\end{eqnarray}
Inserting the estimate in  (\ref{engy}) and applying Lemma \ref{lem:gnorm} leads to
 \begin{eqnarray}
 \frac{3}{4}\norm{\bu_h^N}^2+
 \frac{1}{4}\sum_{n=1}^{N-1}\norm{\bu_h^{n+1}-2\bu_h^n+\bu_h^{n1}}^2_F
  +\frac{\nu\Delta t}{2}\sum_{n=1}^{N-1}(\norm {\nabla F[ \bu_h^{n+1}]}^2)\nonumber\\
  \leq  \frac{1}{4}\norm{\bu_h^{N-1}}^2+ \frac{3}{2}\norm{\bu_h^1}^2 +\frac{3}{4}\norm{\bu_h^0}^2+ \frac{\nu^{-1}\Delta t}{2}\norm{\bff(t^{n+1})}^2_{-1}
 \end{eqnarray}
Lastly, the proof is completed by multiplying by $ \frac{4}{3}$ and using the induction.
\end{proof}
We proceed to present a detailed convergence analysis of the proposed time filtered method for
NSE equations with EMAC formulation. We use the following notations for the discrete norms. For $\bv^n \in H^p(\Omega)$, we define :
\begin{equation*}
\norm{|\bv|}_{\infty,p}:=\max_{0\leq n\leq N}\|\bv^n\|_p, \quad \norm{|\bv|}_{m,p}:=\bigg(\Delta t \sum_{n=0}^{N} \|\bv^n\|_p^m  \bigg)^{\frac{1}{m}}.
\end{equation*}
For the optimal asymptotic error estimation, assume that the following regularity assumptions hold for the exact solution of (\ref{nse}):
\begin{eqnarray}
\begin{array}{rcll}
\bu &\in& L^{\infty}(0,T;H^1(\Omega))\cap H^1(0,T;H^{k+1}(\Omega))\cap H^3(0,T;L^2(\Omega))\cap H^2(0,T;H^1(\Omega)),\\
p &\in& L^2(0,T;H^{s+1}(\Omega)),\label{ras}\\
f &\in& L^2(0,T;L^2(\Omega)).
\end{array}
\end{eqnarray}
\begin{theorem} \label{thm}
		Let $(\bu,p)$ be the solution of (\ref{nse}) such that the regularity assumptions (\ref{ras}) are satisfied. Then, under the following time step condition
		$$\Delta t\leq C(\norm{|\nabla F[\bu^{n+1}]|}_{\infty,0})^{-1},$$
		the following bound holds for the error $e^n_{\bu}=\bu^n-\bu_h^n$
			 \begin{eqnarray}
		\lefteqn{\norm {e_{\bu}^{N}}^{2} + \dfrac{1}{3}\sum_{n=1}^{N-1} \norm {e_{\bu}^{n+1}-2\mathbf e_{\bu}^{n}+\mathbf e_{\bu}^{n-1}}_{F}^{2}+\dfrac{2\Delta t\nu }{3}\sum_{n=1}^{N-1}\norm {\nabla  F[e_{\bu}^{n+1}]}^{2}} \nonumber
		\\
		&\leq&  \bigg(\dfrac{1}{3}\bigg)^N
		\norm{e_{\bu}^{0}}^{2} +2N(\norm{\mathbf e_{\bu}^{1}}^{2}+\norm{\mathbf e_{\bu}^{0}}^{2})+	C \bigg[\nu^{-1}h^{2k+2}\norm{|\bu_t|}_{2,k+1}^2 +\nu h^{2k}\norm{|\bu|}_{2,k+1}^2+\nu^{-1}h^{2k}\norm{|P^{n+1}|}_{2,k}^2\nonumber\\&&
		+ \nu^{-1}\Delta t^{{4}}\norm{\bu_{ttt}}^2_{L^2(0,T;L^2(\Omega))}+ \Delta t^{{4}}\big(\nu+\nu^{-1}(\norm{|\nabla F[\bu^{n+1}]|}_{\infty,0}+ \norm{|\nabla \bu^{n+1}|}_{\infty,0}))\norm{\nabla\bu_{tt}}^2_{L^2(0,T;L^2(\Omega))}
		\nonumber\\&&+\nu^{-1}\bigg(\norm{|\nabla F[\bu^{n+1}]|}_{\infty,0}h^{2k+1}\norm{|\bu|}_{2,k+1}^2+\norm{|\nabla F[\bu^{n+1}]|}_{\infty,0}\norm{| F[\bu^{n+1}]|}_{\infty,0}h^{2k}\norm{|\bu|}_{2,k+1}^2\bigg) \bigg]
		\label{err}
		\end{eqnarray}
		where C is a generic constant independent of $h$ and $\Delta t$.
\end{theorem}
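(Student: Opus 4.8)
The plan is to carry out a standard but careful energy-type error analysis that mirrors the stability argument of the preceding lemma, but now applied to the error equation with the time- and filter-consistency terms retained. First I would evaluate the variational form (\ref{vf})--(\ref{vf1}) of the exact solution at $t^{n+1}$, tested against $\bv_h \in \bfV_h$, and rewrite the exact time derivative $(\bu_t(t^{n+1}),\bv_h)$ as the BDF2 quotient $\frac{1}{\Delta t}(\frac{3}{2}\bu^{n+1}-2\bu^n+\frac{1}{2}\bu^{n-1},\bv_h)$ plus a truncation remainder $\tau^{n+1}$, and similarly replace $\nu(\nabla\bu^{n+1},\nabla\bv_h)$ and $c(\bu^{n+1},\bu^{n+1},\bv_h)$ by their filtered counterparts $\nu(\nabla F[\bu^{n+1}],\nabla\bv_h)$ and $c(F[\bu^{n+1}],F[\bu^{n+1}],\bv_h)$ plus the filter-consistency remainders coming from $\bu^{n+1}-F[\bu^{n+1}]=O(\Delta t^2)$. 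Subtracting the equivalent discrete scheme (\ref{E})--(\ref{D}) then produces an error equation for $e_{\bu}^n$.

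Then I would split the error as $e_{\bu}^n = \eta^n + \phi_h^n$, where $\eta^n=\bu^n-\tilde\bu^n$ is the approximation error of a suitable (Stokes / discretely divergence-free) projection $\tilde\bu^n\in\bfV_h$ and $\phi_h^n=\tilde\bu^n-\bu_h^n\in\bfV_h$. Choosing the test function $\bv_h=F[\phi_h^{n+1}]$, the discrete-divergence-free property removes the pressure against $\phi_h$ (allowing an arbitrary $q_h$ to be inserted and later optimized via (\ref{ap10})), and the time-difference term is handled by the $G$-norm identity (Lemma \ref{lem:inn}): it telescopes into the difference of consecutive $G$-norms plus the $F$-norm numerical-dissipation term. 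This produces exactly the left-hand side of (\ref{err}), so that, up to Young's inequality and the lower bound of Lemma \ref{lem:gnorm}, the skeleton of the estimate is identical to the stability lemma.

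The main obstacle is controlling the nonlinear contribution $c(F[\bu^{n+1}],F[\bu^{n+1}],F[\phi_h^{n+1}])-c(F[\bu_h^{n+1}],F[\bu_h^{n+1}],F[\phi_h^{n+1}])$. I would write $F[\bu^{n+1}]-F[\bu_h^{n+1}]=F[\eta^{n+1}]+F[\phi_h^{n+1}]$ and use bilinearity in the first two slots to expand into several trilinear terms; the cancellation property $c(\bv,\bv,\bv)=0$ removes the pure $c(F[\phi_h^{n+1}],F[\phi_h^{n+1}],F[\phi_h^{n+1}])$ term. The remaining terms each carry either $F[\eta^{n+1}]$ or the exact $F[\bu^{n+1}]$ in one argument, and I would bound them with the EMAC trilinear estimates (including the extra divergence piece $((\nabla\cdot\bu)\bv,\bw)$ and the identities (\ref{i1})--(\ref{i3})), of the type $c(\bu,\bv,\bw)\le C\norm{\nabla\bu}\norm{\nabla\bv}\norm{\nabla\bw}$ together with the sharper Ladyzhenskaya-type bound, followed by Young's inequality to split off a small multiple of $\nu\norm{\nabla F[\phi_h^{n+1}]}^2$ (absorbed on the left) and the projection-error and $\Delta t$ powers on the right. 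The critical term $c(F[\phi_h^{n+1}],F[\bu^{n+1}],F[\phi_h^{n+1}])$ yields a factor $\nu^{-1}\norm{\nabla F[\bu^{n+1}]}_{\infty,0}^2\norm{F[\phi_h^{n+1}]}^2$; this is what forces the Gronwall coefficient $\delta_n$ to involve $\norm{\nabla F[\bu^{n+1}]}_{\infty,0}$, and the time-step condition $\Delta t\le C(\norm{|\nabla F[\bu^{n+1}]|}_{\infty,0})^{-1}$ is precisely what guarantees $\Delta t\,\delta_n<1$ so that Lemma \ref{gl} applies.

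Finally I would bound the viscous mismatch $\nu(\nabla F[\eta^{n+1}],\nabla F[\phi_h^{n+1}])$ and the pressure term by Young's inequality together with (\ref{ap1})--(\ref{ap10}), estimate $\tau^{n+1}$ and the filter remainders by the consistency error estimates (producing the $\nu^{-1}\Delta t^4\norm{\bu_{ttt}}^2$ and $\Delta t^4\norm{\nabla\bu_{tt}}^2$ contributions), multiply through by $\Delta t$, and sum from $n=1$ to $N-1$ to telescope the $G$-norms. Applying the discrete Gronwall lemma then yields the bound on $\phi_h$, and a concluding triangle inequality $e_{\bu}^n=\eta^n+\phi_h^n$ with the approximation properties (\ref{ap1})--(\ref{ap10}) converts it into the stated $h$- and $\Delta t$-dependent estimate (\ref{err}).
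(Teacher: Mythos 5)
Your proposal follows essentially the same route as the paper's proof: the same error equation with the BDF2/filter consistency remainder, the same decomposition $e_{\bu}^n=\eta^n+\phi_h^n$ with the test function $F[\phi_h^{n+1}]$, the $G$-norm identity of Lemma \ref{lem:inn} for the time-difference term, the identification of the critical nonlinear term producing the Gronwall coefficient $\nu^{-1}\norm{\nabla F[\bu^{n+1}]}^2\norm{F[\phi_h^{n+1}]}^2$ and hence the stated time-step restriction, and the concluding summation, discrete Gronwall lemma, and triangle inequality. The plan is sound and matches the paper in all essential respects.
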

Theorem \ref{thm} with the most common choice of inf-sup
stable finite element spaces, like Taylor-Hood element, for the velocity and pressure naturally leads to the following Corollary, proving second order accuracy both in time and space.
\begin{corollary} \label{cor} Under the assumptions of Theorem \ref{thm}, let  $(\bfX_h,Q_h)=(P_2,P_{1})$ be	the Taylor-Hood finite element spaces for velocity and pressure. Then, the asymptotic error estimation satisfies, for all $\Delta t>0$
	\begin{eqnarray}
	\norm {e_{\bu}^{N}}^{2} + \dfrac{1}{3}\sum_{n=1}^{N-1} \norm {e_{\bu}^{n+1}-2\mathbf e_{\bu}^{n}+\mathbf e_{\bu}^{n-1}}_{F}^{2}+\dfrac{2\Delta t\nu }{3}\sum_{n=1}^{N-1}\norm {\nabla  F[e_{\bu}^{n+1}]}^{2} \nonumber
\\
	\leq C \bigg(h^4+\Delta t^4+\|\be^0\|^2+\|\be^1\|^2\bigg)\nonumber.
	\end{eqnarray}
\end{corollary}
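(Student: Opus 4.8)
The plan is to derive Corollary \ref{cor} as a direct specialization of the general error bound (\ref{err}) of Theorem \ref{thm}, since the Taylor--Hood pair corresponds exactly to $k=2$ (velocity in $P_2$, pressure in $P_{k-1}=P_1$). First I would substitute $k=2$ throughout (\ref{err}) and track the resulting powers of $h$. The two terms $\nu h^{2k}\norm{|\bu|}_{2,k+1}^2$ and $\nu^{-1}h^{2k}\norm{|P^{n+1}|}_{2,k}^2$ produce the leading spatial contribution $h^{4}$, whereas $\nu^{-1}h^{2k+2}\norm{|\bu_t|}_{2,k+1}^2=O(h^{6})$ and the $h^{2k+1}=O(h^{5})$ interpolation term are of strictly higher order and hence dominated by $h^{4}$ for $h\le 1$. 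Collecting the purely temporal contributions (the $\Delta t^{4}$ consistency terms carrying $\bu_{ttt}$ and $\nabla\bu_{tt}$) supplies the $\Delta t^{4}$ part of the bound.

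Second, I would verify that every solution-dependent norm on the right-hand side of (\ref{err}) is finite and independent of $h$ and $\Delta t$ under the regularity hypotheses (\ref{ras}). Concretely, $\norm{|\bu_t|}_{2,k+1}$, $\norm{|\bu|}_{2,k+1}$ and $\norm{|P^{n+1}|}_{2,k}$ are controlled by $\bu\in H^1(0,T;H^{k+1})$ and $p\in L^2(0,T;H^{s+1})$; the consistency norms $\norm{\bu_{ttt}}_{L^2(0,T;L^2(\Omega))}$ and $\norm{\nabla\bu_{tt}}_{L^2(0,T;L^2(\Omega))}$ by $\bu\in H^3(0,T;L^2)\cap H^2(0,T;H^1)$; and the factors $\norm{|\nabla F[\bu^{n+1}]|}_{\infty,0}$, $\norm{|F[\bu^{n+1}]|}_{\infty,0}$ by $\bu\in L^\infty(0,T;H^1)$, using that $F[\cdot]$ is a fixed linear combination of three consecutive time levels. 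With $\nu$ fixed, these quantities together with the positive and negative powers of $\nu$ are absorbed into the generic constant $C$, leaving $C(h^{4}+\Delta t^{4})$ plus the starting-value contributions.

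Finally I would dispose of the initialization terms, which is the step I expect to be the main obstacle. The factor $(1/3)^N$ multiplying $\norm{e_{\bu}^{0}}^2$ decays and is harmless, but the genuine difficulty is the $2N$ prefactor on $\norm{e_{\bu}^{1}}^2+\norm{e_{\bu}^{0}}^2$, since $N=T/\Delta t\to\infty$ under time refinement. The clean resolution is to assume the method is started with sufficiently accurate initial data --- either $\be^0=\be^1=0$ in the error sense, or first-step values computed so that $\norm{e_{\bu}^{0}}$ and $\norm{e_{\bu}^{1}}$ vanish (or are small relative to the required $h$--$\Delta t$ balance) --- so that $N(\norm{e_{\bu}^{1}}^2+\norm{e_{\bu}^{0}}^2)$ does not degrade the rate; under such an initialization the three left-hand quantities are bounded by $C(h^{4}+\Delta t^{4}+\norm{\be^0}^2+\norm{\be^1}^2)$, which is the claimed estimate. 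I would also reconcile the time-step phrasing: the restriction of Theorem \ref{thm} reads $\Delta t\le C\,(\norm{|\nabla F[\bu^{n+1}]|}_{\infty,0})^{-1}$, i.e. simply $\Delta t$ below a fixed constant determined by the exact solution, which is how the corollary's ``for all $\Delta t>0$'' should be understood.
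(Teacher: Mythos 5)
Your proposal is correct and follows essentially the same route as the paper: Corollary \ref{cor} is obtained by specializing the bound (\ref{err}) of Theorem \ref{thm} to $k=2$, invoking the approximation properties (\ref{ap1})--(\ref{ap10}) and the regularity assumptions (\ref{ras}) to absorb all solution-dependent norms into the generic constant, with $h^{2k}=h^4$ as the leading spatial term. Your additional observation about the $2N\left(\norm{e_{\bu}^{1}}^2+\norm{e_{\bu}^{0}}^2\right)$ prefactor is well taken --- the paper's one-line proof silently drops the factor $N$, so an assumption of sufficiently accurate initialization (as you propose) is indeed needed for the corollary to hold exactly as stated.
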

\begin{proof}
	Application of the approximation properties (\ref{i1})-(\ref{i2}) on the right hand side of (\ref{err}) and the regularity assumption (\ref{ras}) gives the required result.
\end{proof}
We now give the proof of our main theorem.
\begin{proof}
 The proof starts by deriving the error equations. Denote $\bu^{n+1}=\bu(t^{n+1})$. At time $t^{n+1}$, the true solution of the NSE (\ref{nse}) satisfies
 \begin{eqnarray}
 \bigg(\dfrac{3\bu^{n+1}-4\bu^n+\bu^{n-1}}{2\Delta t},\bv_h\bigg)+\nu\big(\nabla  F[ \bu^{n+1}],\nabla \bv_h\big)+c\big( F[ \bu^{n+1}], F[ \bu^{n+1}],\bv_h\big)-\big(P^{n+1},\nabla\cdot\bv_h)\\
 =\big(\bff^{n+1},\bv_h\big)+ Intp(\bu,\bv_h)\label{eh}
 \end{eqnarray}
 for all $\bv_h \in \bfV_h$ where
 	\begin{eqnarray}
 Intp(\bu^{n+1},\bv_h)=\bigg(\dfrac{3\bu^{n+1}-4\bu^n+\bu^{n-1}}{2\Delta t}-\bu_t^{n+1},\bv_h\bigg)+\nu(\nabla F[\bu^{n+1}]-\bu^{n+1}, \bv_h )\nonumber\\+c\big(F[\bu^{n+1}],F[\bu^{n+1}],\bv_h\big)-c\big(\bu^{n+1},\bu^{n+1},\bv_h\big)
 \end{eqnarray}
 denotes the local truncation error.\\
  Subtracting (\ref{E}) from (\ref{eh}) yields
 \begin{eqnarray}
 \bigg(\dfrac{3e_{\bu}^{n+1}-4e_{\bu}^n+e_{\bu}^{n-1}}{2\Delta t},\bv_h\bigg)+\nu\big(\nabla  F[e_{\bu}^{n+1}],\nabla \bv_h\big) +c\big( F[ \bu^{n+1}], F[ \bu^{n+1}],\bv_h\big)\nonumber\\-c\big( F[ \bu_h^{n+1}], F[ \bu_h^{n+1}],\bv_h\big)-\big(P^{n+1},\nabla\cdot\bv_h)
 =Intp(\bu,\bv_h)\label{eh1}
 \end{eqnarray}
 	Decompose the error as
 \begin{eqnarray}
 e_{\bu}^n=\bu(t^n)-\bu_h^n=\big(\bu(t^n)-I^h\bu^n\big)+\big(I^h\bu^n-\bu_h^n\big)= \eta^n+\phi_h^n.\label{se}
 \end{eqnarray}
 where $I^h\bu^n$  is an interpolant of $\bu^n$
 in $\bfV_{h}$.
  Choosing $\bv_h=F[ \phi_h^{n+1}]$ in (\ref{eh1}), using the error decomposition and Lemma \ref{lem:inn}, it follows that
 	\begin{eqnarray}
 \frac{1}{\Delta t}\norm{ \begin{bmatrix}
 	\phi^{n+1}_h \\
 	\phi^{n}_h
 	\end{bmatrix}} _{G}^{2} - \frac{1}{\Delta t}\norm{ \begin{bmatrix}
 	\phi^{n}_h \\
 	\phi^{n-1}_h
 	\end{bmatrix}}_{G}^{2}+
 \frac{1}{4\Delta t}\norm{\phi_h^{n+1}-2\phi_h^n+\phi_h^{n-1}}^2_F+\nu\norm {\nabla F[ \phi_h^{n+1}]}^2\nonumber\\ = \bigg(\dfrac{3 \eta^{n+1}-4 \eta^n+ \eta^{n-1}}{2\Delta t}, F[ \phi_h^{n+1}]\bigg)+\nu\big(\nabla  F[\eta^{n+1}],\nabla F[ \phi_h^{n+1}]\big) \nonumber\\ -c\big( F[ \bu^{n+1}], F[ \bu^{n+1}], F[ \phi_h^{n+1}]\big)+c\big( F[ \bu_h^{n+1}], F[ \bu_h^{n+1}], F[ \phi_h^{n+1}]\big)\nonumber\\-\big(P^{n+1},\nabla\cdot F[ \phi_h^{n+1}]\big)+Intp(\bu, F[ \phi_h^{n+1}])\label{rhs}
 \end{eqnarray}
Next, estimate the terms on the right hand side of  (\ref{rhs}). The first two terms are bounded by applying  Cauchy-Schwarz and Young's inequality:
	\begin{eqnarray}
\bigg|-\bigg(\dfrac{3\eta^{n+1}-4\eta^n+\eta^{n-1}}{2\Delta t}, F[\phi_h^{n+1}]\bigg)\bigg|
&\leq&\norm{\dfrac{3\eta^{n+1}-4\eta^n+\eta^{n-1}}{2\Delta t}}\norm{  F[\phi_h^{n+1}]}^2\nonumber\\
&\leq& \dfrac{C\nu^{-1}}{\Delta t}\int_{t^{n-1}}^{t^{n+1}}\|\eta_t\|^2dt+\dfrac{\nu}{28}\| \nabla F[\phi_h^{n+1}]\|^2 \label{t1}
\end{eqnarray}
\begin{eqnarray}
\big|\nu(\nabla F[\eta^{n+1}],\nabla F[\phi_h^{n+1}])\big|
&\leq&\nu\|\nabla F[\eta^{n+1}]\|\|\nabla F[\phi_h^{n+1}]\|\nonumber\\
&\leq& C\nu\|\nabla F[\eta^{n+1}]\|^2+\dfrac{\nu}{28}\|\nabla F[\phi_h^{n+1}]\|^2.\label{t5}
\end{eqnarray}
Following Theorem 3.2 in [43], the nonlinear terms are estimated as,
\begin{eqnarray}
\lefteqn{\bigg| -c\big( F[ \bu^{n+1}], F[ \bu^{n+1}], F[ \phi_h^{n+1}]\big)+c\big( F[ \bu_h^{n+1}], F[ \bu_h^{n+1}], F[ \phi_h^{n+1}]\big)\bigg|}\nonumber\\&\leq& C\nu^{-1}\bigg(\norm{\nabla F[\bu^{n+1}]}^2\norm{ F[\eta^{n+1}]}\norm{\nabla F[\eta^{n+1}]}+\norm{\nabla F[\bu^{n+1}]}\norm{ F[\bu^{n+1}]}\norm{\nabla F[\eta^{n+1}]}^2\bigg)\nonumber\\&&+C\nu^{-1}\norm{\nabla F[\bu^{n+1}]}^2\norm{F[ \phi_h^{n+1}]}^2+\dfrac{\nu}{28}\|\nabla F[\phi_h^{n+1}]\|^2.
\end{eqnarray}
For the pressure term, use the fact that $(\nabla\cdot\phi_h,q_h)=0,\, \forall \phi_h \in \bfV_h$ together with Cauchy-Schwarz and Young's inequalities to get
\begin{eqnarray}
\big|P^{n+1},\nabla\cdot \phi_h^{n+1})\big|&=& \big|(P^{n+1}-q_h,\nabla\cdot \phi_h^{n+1})\big|\nonumber\\
&\leq& C\nu^{-1}\big\|\inf_{q_h\in Q_h}\big\|P^{n+1}-q_h\big\|^2+\dfrac{\nu}{28}\|\nabla F[ \phi_h^{n+1}]\|^2.\label{t8}
\end{eqnarray}
We proceed to bound the terms in the local truncation error $Intp(\bu, F[ \phi_h^{n+1}])$. For the first two terms of $Intp(\bu, F[ \phi_h^{n+1}])$,
apply the Cauchy-Schwarz and Young's inequalities together with
the integral remainder form of Taylor's theorem
 to obtain
	\begin{eqnarray}
\lefteqn{\bigg|\bigg(\dfrac{3\bu^{n+1}-4\bu^n+\bu^{n-1}}{2\Delta t}-\bu_t^{n+1},F[ \phi_h^{n+1}]\bigg)\bigg|}\nonumber\\
&\leq& \norm{\dfrac{3\bu^{n+1}-4\bu^n+\bu^{n-1}}{2\Delta t}-\bu_t(t^{n+1})}\norm{F[ \phi_h^{n+1}]}\nonumber\\
&\leq& C\Delta t^{{3}}\nu^{-1}\int_{t_{n-1}}^{t_{n+1}}\|\bu_{ttt}\|^2dt+\dfrac{\nu}{28}\|\nabla F[ \phi_h^{n+1}]\| \label{t10}
\end{eqnarray}
\begin{eqnarray}
\nu(\nabla(F[ \bu^{n+1}]-\bu_{n+1}),\nabla F[ \phi_h^{n+1}])
\leq C\nu\norm{\nabla(F[ \bu^{n+1}]-\bu_{n+1})}^2+\dfrac{\nu}{28}\norm{\nabla F[ \phi_h^{n+1}]}^{2}\nonumber\\
\leq C\nu\Delta t^{3}\int_{t_{n-1}}^{t_{n+1}}\norm{\nabla\bu_{tt}}^{2}dt+\dfrac{\nu}{28}\norm{\nabla F[ \phi_h^{n+1}]}^{2}.\nonumber
\end{eqnarray}
To bound the convective terms in $Intp(\bu, F[ \phi_h^{n+1}])$, we first rearrange the terms. Adding and subtracting terms for the convective terms and using the definition of the EMAC formulation gives
\begin{eqnarray}
\lefteqn{c\big(F[\bu^{n+1}],F[\bu^{n+1}],F[ \phi_h^{n+1}]\big)-c\big(\bu^{n+1},\bu^{n+1},F[ \phi_h^{n+1}]\big)}\nonumber\\
&=&c\big(F[\bu^{n+1}]-\bu^{n+1},F[\bu^{n+1}],F[ \phi_h^{n+1}]\big)+c\big(\bu^{n+1},F[\bu^{n+1}]-\bu^{n+1},F[ \phi_h^{n+1}]\big)\nonumber\\
&=&(F[\bu^{n+1}]\cdot \nabla (F[\bu^{n+1}]-\bu^{n+1}),F[ \phi_h^{n+1}])+ (F[ \phi_h^{n+1}] \cdot \nabla F[\bu^{n+1}]-\bu^{n+1}),F[\bu^{n+1}])\nonumber\\ &-&  (  (F[\bu^{n+1}]-\bu^{n+1})  \cdot \nabla F[\bu^{n+1}], F[ \phi_h^{n+1}])- (  (F[\bu^{n+1}]-\bu^{n+1})  \cdot \nabla F[ \phi_h^{n+1}], F[\bu^{n+1}])\nonumber\\
&+& (  (F[\bu^{n+1}]-\bu^{n+1})  \cdot \nabla \bu^{n+1}, F[ \phi_h^{n+1}])+ (F[ \phi_h^{n+1}] \cdot \nabla \bu^{n+1}, F[\bu^{n+1}]-\bu^{n+1}))\nonumber\\
&-& (\bu^{n+1}\cdot \nabla (F[\bu^{n+1}]-\bu^{n+1}),F[ \phi_h^{n+1}])- ( \bu^{n+1}\cdot \nabla F[ \phi_h^{n+1}] ,(F[\bu^{n+1}]-\bu^{n+1})).\label{eh2}
\end{eqnarray}
Then, the convective terms in (\ref{eh2}) are estimated by applying the Cauchy-Schwarz and Young's inequalities together with
the integral remainder form of Taylor's theorem as
\begin{eqnarray}
\lefteqn{|(F[\bu^{n+1}]\cdot \nabla (F[\bu^{n+1}]-\bu^{n+1}),F[ \phi_h^{n+1}])|}\nonumber\\&\leq& C\nu^{-1}\norm{\nabla F[\bu^{n+1}]}^2\norm{\nabla (F[\bu^{n+1}]-\bu^{n+1})}^2+\dfrac{\nu}{28}\norm{\nabla F[ \phi_h^{n+1}]}^{2}\nonumber\\
&\leq& C\nu^{-1}\Delta t^{3}\norm{\nabla F[\bu^{n+1}]}^2\int_{t_{n-1}}^{t_{n+1}}\norm{\nabla\bu_{tt}}^{2}dt+\dfrac{\nu}{28}\norm{\nabla F[ \phi_h^{n+1}]}^{2},\label{eh3}
\end{eqnarray}
\begin{eqnarray}
\lefteqn{|(F[ \phi_h^{n+1}] \cdot \nabla F[\bu^{n+1}]-\bu^{n+1}),F[\bu^{n+1}])|}\nonumber\\&\leq& C\nu^{-1}\norm{\nabla F[\bu^{n+1}]}^2\norm{\nabla (F[\bu^{n+1}]-\bu^{n+1})}^2+\dfrac{\nu}{28}\norm{\nabla F[ \phi_h^{n+1}]}^{2}\nonumber\\
&\leq& C\nu^{-1}\Delta t^{3}\norm{\nabla F[\bu^{n+1}]}^2\int_{t_{n-1}}^{t_{n+1}}\norm{\nabla\bu_{tt}}^{2}dt+\dfrac{\nu}{28}\norm{\nabla F[ \phi_h^{n+1}]}^{2},
\end{eqnarray}
\begin{eqnarray}
\lefteqn{|- (  (F[\bu^{n+1}]-\bu^{n+1})  \cdot \nabla F[\bu^{n+1}], F[ \phi_h^{n+1}])| }\nonumber\\&\leq& C\nu^{-1}\norm{\nabla F[\bu^{n+1}]}^2\norm{\nabla (F[\bu^{n+1}]-\bu^{n+1})}^2+\dfrac{\nu}{28}\norm{\nabla F[ \phi_h^{n+1}]}^{2}\nonumber\\
&\leq& C\nu^{-1}\Delta t^{3}\norm{\nabla F[\bu^{n+1}]}^2\int_{t_{n-1}}^{t_{n+1}}\norm{\nabla\bu_{tt}}^{2}dt+\dfrac{\nu}{28}\norm{\nabla F[ \phi_h^{n+1}]}^{2},
\end{eqnarray}

\begin{eqnarray}
\lefteqn{|-(  (F[\bu^{n+1}]-\bu^{n+1})  \cdot \nabla F[ \phi_h^{n+1}], F[\bu^{n+1}])|}\nonumber\\&\leq& C\nu^{-1}\norm{\nabla F[\bu^{n+1}]}^2\norm{\nabla (F[\bu^{n+1}]-\bu^{n+1})}^2+\dfrac{\nu}{28}\norm{\nabla F[ \phi_h^{n+1}]}^{2}\nonumber\\
&\leq&C\nu^{-1}\Delta t^{3}\norm{\nabla F[\bu^{n+1}]}^2\int_{t_{n-1}}^{t_{n+1}}\norm{\nabla\bu_{tt}}^{2}dt+\dfrac{\nu}{28}\norm{\nabla F[ \phi_h^{n+1}]}^{2},
\end{eqnarray}
\begin{eqnarray}
\lefteqn{|(  (F[\bu^{n+1}]-\bu^{n+1})  \cdot \nabla \bu^{n+1}, F[ \phi_h^{n+1}])|}\nonumber\\&\leq& C\nu^{-1}\norm{\nabla \bu^{n+1}}^2\norm{\nabla (F[\bu^{n+1}]-\bu^{n+1})}^2+\dfrac{\nu}{28}\norm{\nabla F[ \phi_h^{n+1}]}^{2}\nonumber\\
&\leq& C\nu^{-1}\Delta t^{3}\norm{\nabla \bu^{n+1}}^2\int_{t_{n-1}}^{t_{n+1}}\norm{\nabla\bu_{tt}}^{2}dt+\dfrac{\nu}{28}\norm{\nabla F[ \phi_h^{n+1}]}^{2},
\end{eqnarray}
\begin{eqnarray}
\lefteqn{|(F[ \phi_h^{n+1}] \cdot \nabla \bu^{n+1}, F[\bu^{n+1}]-\bu^{n+1}))|}\nonumber\\&\leq& C\nu^{-1}\norm{\nabla \bu^{n+1}}^2\norm{\nabla (F[\bu^{n+1}]-\bu^{n+1})}^2+\dfrac{\nu}{28}\norm{\nabla F[ \phi_h^{n+1}]}^{2}\nonumber\\
&\leq& C\nu^{-1}\Delta t^{3}\norm{\nabla \bu^{n+1}}^2\int_{t_{n-1}}^{t_{n+1}}\norm{\nabla\bu_{tt}}^{2}dt+\dfrac{\nu}{28}\norm{\nabla F[ \phi_h^{n+1}]}^{2},
\end{eqnarray}
\begin{eqnarray}
\lefteqn{|-(\bu^{n+1}\cdot \nabla (F[\bu^{n+1}]-\bu^{n+1}),F[ \phi_h^{n+1}]))|}\nonumber\\&\leq& C\nu^{-1}\norm{\nabla \bu^{n+1}}^2\norm{\nabla (F[\bu^{n+1}]-\bu^{n+1})}^2+\dfrac{\nu}{28}\norm{\nabla F[ \phi_h^{n+1}]}^{2}\nonumber\\
&\leq& C\nu^{-1}\Delta t^{3}\norm{\nabla \bu^{n+1}}^2\int_{t_{n-1}}^{t_{n+1}}\norm{\nabla\bu_{tt}}^{2}dt+\dfrac{\nu}{28}\norm{\nabla F[ \phi_h^{n+1}]}^{2},
\end{eqnarray}
\begin{eqnarray}
\lefteqn{|-(\bu^{n+1}\cdot \nabla F[ \phi_h^{n+1}], (F[\bu^{n+1}]-\bu^{n+1}))|}\nonumber\\&\leq& C\nu^{-1}\norm{\nabla \bu^{n+1}}^2\norm{\nabla (F[\bu^{n+1}]-\bu^{n+1})}^2+\dfrac{\nu}{28}\norm{\nabla F[ \phi_h^{n+1}]}^{2}\nonumber\\
&\leq& C\nu^{-1}\Delta t^{3}\norm{\nabla \bu^{n+1}}^2\int_{t_{n-1}}^{t_{n+1}}\norm{\nabla\bu_{tt}}^{2}dt+\dfrac{\nu}{28}\norm{\nabla F[ \phi_h^{n+1}]}^{2}.\label{eh4}
\end{eqnarray}
Inserting (\ref{eh3})-(\ref{eh4}) into (\ref{eh}) gives
	\begin{eqnarray}
\lefteqn{\frac{1}{\Delta t}\norm{ \begin{bmatrix}
	\phi^{n+1}_h \\
	\phi^{n}_h
	\end{bmatrix}} _{G}^{2} -  \frac{1}{\Delta t}\norm{ \begin{bmatrix}
	\phi^{n}_h \\
	\phi^{n-1}_h
	\end{bmatrix}} _{G}^{2}+
\frac{1}{4\Delta t}\norm{\phi_h^{n+1}-2\phi_h^n+\phi_h^{n1}}^2_F+\frac{\nu}{2}\norm {\nabla F[ \phi_h^{n+1}]}^2}
\nonumber\\
&\leq& \frac{ C\nu^{-1}}{\Delta t}\int_{t^{n-1}}^{t^{n+1}}\|\eta_t\|^2dt + C\nu\|\nabla F[\eta^{n+1}]\|^2+C\nu^{-1}\big\|\inf_{q_h\in Q_h}\big\|P^{n+1}-q_h\big\|^2\nonumber\\
&&+ C\nu^{-1}\Delta t^{{3}}\int_{t_{n-1}}^{t_{n+1}}\|\bu_{ttt}\|^2dt+ C\nu\Delta t^{3}\int_{t_{n-1}}^{t_{n+1}}\norm{\nabla\bu_{tt}}^{2}dt\nonumber\\&&+C\nu^{-1}\Delta t^{3}\big(\norm{\nabla F[\bu^{n+1}]}^2+\norm{\nabla \bu^{n+1}}^2\big)\int_{t_{n-1}}^{t_{n+1}}\norm{\nabla\bu_{tt}}^{2}dt\nonumber\\&&+C\nu^{-1}\bigg(\norm{\nabla F[\bu^{n+1}]}^2\norm{ F[\eta^{n+1}]}\norm{\nabla F[\eta^{n+1}]}+\norm{\nabla F[\bu^{n+1}]}\norm{ F[\bu^{n+1}]}\norm{\nabla F[\eta^{n+1}]}^2\bigg)\nonumber\\&&+C\nu^{-1}\norm{\nabla F[\bu^{n+1}]}^2\norm{F[ \phi_h^{n+1}]}^2\nonumber
\end{eqnarray}
Multiplying by $\Delta t$, summing from $n=1$ to $n=N-1$ and using approximation properties (\ref{ap1})-(\ref{ap10}) yields
	\begin{eqnarray}
\lefteqn{\norm{ \begin{bmatrix}
	\phi^{N}_h \\
	\phi^{N-1}_h
	\end{bmatrix}} _{G}^{2} +
\frac{1}{4}\norm{\phi_h^{n+1}-2\phi_h^n+\phi_h^{n-1}}^2_F+\frac{\nu\Delta t}{2}\norm {\nabla F[ \phi_h^{n+1}]}^2}
\nonumber\\
&\leq& \norm{ \begin{bmatrix}
	\phi^{1}_h \\
	\phi^{0}_h
	\end{bmatrix}} _{G}^{2}+ C\bigg(\nu^{-1}h^{2k+2}\norm{|\bu_t|}_{2,k+1}^2 +\nu h^{2k}\norm{|\bu|}_{2,k+1}^2+\nu^{-1}h^{2k}\norm{|P^{n+1}|}_{2,k}^2\bigg)\nonumber\\&&
+ C\nu^{-1}\Delta t^{{4}}\norm{\bu_{ttt}}^2_{L^2(0,T;L^2(\Omega))}+ C\Delta t^{{4}}\big(\nu+\nu^{-1}(\norm{|\nabla F[\bu^{n+1}]|}_{\infty,0}+ \norm{|\nabla \bu^{n+1}|}_{\infty,0}))\norm{\nabla\bu_{tt}}^2_{L^2(0,T;L^2(\Omega))}
\nonumber\\&&+C\nu^{-1}\bigg(\norm{|\nabla F[\bu^{n+1}]|}_{\infty,0}h^{2k+1}\norm{|\bu|}_{2,k+1}^2+\norm{|\nabla F[\bu^{n+1}]|}_{\infty,0}\norm{| F[\bu^{n+1}]|}_{\infty,0}h^{2k}\norm{|\bu|}_{2,k+1}^2\bigg)\nonumber\\&&+C\nu^{-1}\norm{|\nabla F[\bu^{n+1}]|}_{\infty,0}\Delta t \sum_{n=1}^{N-1}\norm{F[ \phi_h^{n+1}]}^2
\end{eqnarray}
	Applying the discrete Gronwall inequality with the assumption
	$$\Delta t\leq C(\norm{|\nabla F[\bu^{n+1}]|}_{\infty,0})^{-1}$$
	and using Lemma \ref{lem:gnorm} produces
	 \begin{eqnarray}
\lefteqn{\norm {\bphi^{N}_{h}}^{2} + \dfrac{1}{3}\sum_{n=1}^{N-1} \norm {\bphi^{n+1}_{h}-2\mathbf \bphi^{n}_{h}+\mathbf \bphi^{n-1}_{h}}_{F}^{2}+\dfrac{2\Delta t\nu }{3}\sum_{n=1}^{N-1}\norm {\nabla  F[\bphi_{h}^{n+1}]}^{2}} \nonumber
\\
&\leq& 	 \bigg(\dfrac{1}{3}\bigg)^N
\norm{
	\bphi^{0}_{h}}^{2} +2N(\norm{\mathbf \bphi^{1}_h}^{2}+\norm{\mathbf \bphi^{0}_h}^{2})+C \bigg[\nu^{-1}h^{2k+2}\norm{|\bu_t|}_{2,k+1}^2 +\nu h^{2k}\norm{|\bu|}_{2,k+1}^2+\nu^{-1}h^{2k}\norm{|P^{n+1}|}_{2,k}^2\nonumber\\&&
+ \nu^{-1}\Delta t^{{4}}\norm{\bu_{ttt}}^2_{L^2(0,T;L^2(\Omega))}+ \Delta t^{{4}}\big(\nu+\nu^{-1}(\norm{|\nabla F[\bu^{n+1}]|}_{\infty,0}+ \norm{|\nabla \bu^{n+1}|}_{\infty,0}))\norm{\nabla\bu_{tt}}^2_{L^2(0,T;L^2(\Omega))}
\nonumber\\&&+\nu^{-1}\bigg(\norm{|\nabla F[\bu^{n+1}]|}_{\infty,0}h^{2k+1}\norm{|\bu|}_{2,k+1}^2+\norm{|\nabla F[\bu^{n+1}]|}_{\infty,0}\norm{| F[\bu^{n+1}]|}_{\infty,0}h^{2k}\norm{|\bu|}_{2,k+1}^2\bigg)\bigg]
\label{sta}
\end{eqnarray}
The final result follows from the triangle inequality.
\end{proof}
\section{Numerical Experiments}
In this section, we perform three different numerical experiments to test the effectiveness of the proposed Algorithm (\ref{a1})-(\ref{a3}) and compare the results with the non-filtered BE-EMAC scheme (step 1 without step 2). The first test confirms the order of convergence rates predicted in Corollary \ref{cor} for an analytic test problem with a known solution. In second example, we check the energy, momentum and angular momentum conservation of the EMAC-FILTERED scheme in a so-called Gresho problem. In last test, we studied a typical  benchmark problems of flow around a cylinder to demonstrate the superiority of  EMAC-FILTERED method  over BE-EMAC scheme.  All simulations are carried out with the Taylor-Hood finite element pairs $(\bfX_h, Q_h)=(P_2,P_1)$ for velocity and pressure on conforming triangular grids. The computations are performed with the public license finite element software package FreeFem++ \cite{hec}.

\subsection{Convergence Rates}
In this part, we verify the expected convergence rates of our numerical scheme (\ref{a1})-(\ref{a3}) described by Corollary \ref{cor}. For this purpose, we pick the analytical solution:
\begin{eqnarray}
\bu=\begin{pmatrix}
cos(y)e^t\\sin(x)e^t
\end{pmatrix} ,\quad  p=(x-y)(1 + t)\nonumber
\end{eqnarray}
with the kinematic viscosity $\nu=1$ and from which the external force is determined so that (\ref{nse}) is satisfied. Computations are performed in the unit square domain $\Omega=[0,1]^2$.  The boundary conditions are enforced to be the true solution. The approximate solutions are computed on successive mesh refinements and the velocity errors are measured in the discrete norm $L^2(0,T;H^1(\Omega))$
 $$\|\textbf{u}-\textbf{u}^h\|_{2,1}=\left\{\Delta t \sum_{n=1}^{N}\|\nabla\big(\textbf{u}(t^n)-\textbf{u}_{n}^{h}\big)\|^{2}\right\}^{1/2}.$$
 To test the spatial convergence, we fixed the time step as $\Delta t = 0.00001$ with an end time $t= 10^{-4}$ to isolate the spatial error and calculate the errors for varying $h$. Results for errors and rates are shown in Table \ref{table:tab1}. In a similar manner, we fix the mesh size to $h=1/128$ to compute temporal errors and convergence rates by using different time steps with an end time of $t=1$, see Table \ref{table:tab2}. One can observe second order accuracy both in time and space, which is the optimal convergence rate found in Corollary \ref{cor}. Thus we can conclude that the addition of time filtering not only increases the time accuracy, but also does not degrade the spatial order of convergence.

\begin{table}[h!]
	 \begin{center}
	 \begin{minipage}[b]{60mm}

 		\begin{tabular}{|c|c|c|}
 			\hline
 			$h$ & $\|u-u^h\|_{2,1}$ & Rate \\
 			\hline
 			1/4&2.32618e-6 &--   \\
 			\hline
 			1/8&5.80867e-7 &2.00209     \\
 			\hline
 			1/16&1.44272e-7 &2.00938     \\
 			\hline
 			1/32&3.5518e-8 &2.0221   \\
 			\hline
 			1/64&9.56472e-9 &1.89275       \\
 			\hline
 		\end{tabular}
 		\caption{Spatial errors and rates}
 		\label{table:tab1}
 	 \end{minipage} \begin{minipage}[b]{60mm}

		\begin{tabular}{|c|c|c|}
			\hline
			$\Delta t$ & $\|u-u^h\|_{2,1}$ & Rate \\
			\hline
			1/4&0.0281784 &--   \\
			\hline
			1/8&0.00693954 &2.02165     \\
			\hline
			1/16&0.00124549 &2.47811    \\
			\hline
			1/32&0.000227284 &2.45412   \\
			\hline
			1/64&4.08335e-5 &2.4767       \\
			\hline
		\end{tabular}
		\caption{Temporal errors and rates}
		\label{table:tab2}
\end{minipage}
 \end{center}
 \end{table}
\subsection{Gresho Problem}
The second experiment we consider is Gresho problem, which is also referred to as the "standing vortex problem" \cite{P90, RB}. We aim here to numerically verify that the quantities mentioned in Section \ref{seccon} are conserved by EMAC-FILTERED scheme. The simulation starts with an initial condition $\bu_0$ that is an exact solution of the steady Euler equations. On the domain $\Omega=(-0.5,0.5)^2$ with $r=\sqrt{x^2+y^2}$, the velocity and pressure solutions are defined by

\begin{eqnarray}
r\leq 0.2 &:& \bigg\{\bu=\left(
\begin{array}{c}
-5y \\
5x
\end{array}\right),
p=12.5r^2 + C_1 \nonumber\\
0.2\leq r\leq 0.4 &:& \bigg\{\bu=\left(
\begin{array}{c}
\frac{2y}{r}+5y \\
\frac{2x}{r}-5x
\end{array}\right),
p=12.5r^2-20r+4\log(r) + C_2 \\
r>0.4 &:& \bigg\{\bu=\left(
\begin{array}{c}
0 \\
0
\end{array}\right),
p=0\nonumber
\end{eqnarray}
where
$$ C_2=(-12.5)(0.4)^2+20(0.4)^2-4\log(0.4)
, \quad \quad C_1=C_2-20(0.2)+4\log(0.2).$$
\begin{figure}[h!]
	\centering
	\includegraphics[width=75mm]{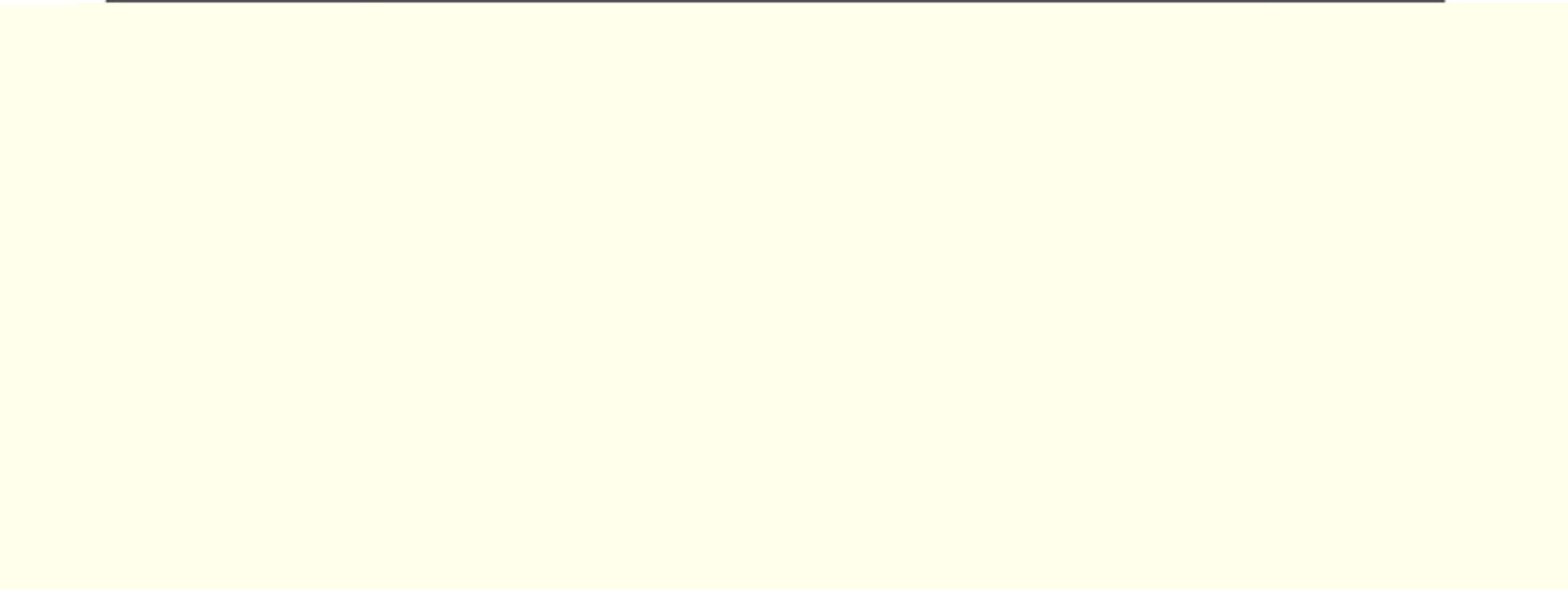}

	\caption{Speed contours of true solution of the Gresho prolem at all times.}
	\label{fig:mesh}
\end{figure}
The speed plot of this initial condition can be seen in Figure \ref{fig:mesh}.\\
We compute solutions of the EMAC-FILTERED and BE-EMAC schemes by using Newton iterations to solve the nonlinear term with $\bff=0,\nu=0$ and no penetration boundary conditions up to $T=8$. The computations are run on a $48\times 48$ uniform mesh with a time step size $\Delta t=0.025$. Since the initial condition is the solution of the steady Euler equation, the accuracy of the method depends on keeping this solution unchanged over time. In addition, since there are no viscosity and external force, the problem is highly suitable to test the conservation properties of a numerical method. Plots of energy, momentum, angular momentum and $L^2$ error versus time of both the EMAC-FILTERED and the BE-EMAC are shown in Figure \ref{fig:Conservation}. We can deduce from the figure that EMAC-FILTERED scheme we consider conserves momentum and angular momentum and accurate as predicted in the theory. Making use of this scheme has no drawbacks in terms of preserving desired physical quantities when compared to BE-EMAC. For longer time intervals, the energy loss of EMAC-FILTERED scheme is slightly better than the BE-EMAC scheme after $t=5$.

In addition, we calculate and compare the
physical dissipation and numerical dissipation to support the conservation properties of the scheme for the same test with $ Re=1000$ over time interval of $[0, 10]$. The results are presented in Figure \ref{fig:dis}. As clearly seen, numerical dissipation, which is almost zero, is much smaller than
the physical dissipation. This shows that the energy loss of the proposed scheme is very low. So we can deduce that, in terms of physical quantities, making use of EMAC-FILTERED scheme has no disadvantage over BE-EMAC scheme and even slightly advantageous in terms of energy loss.

\begin{figure}[h!]
	\centering
	\begin{subfigure}[b]{0.4\linewidth}
		\includegraphics[width=70 mm]{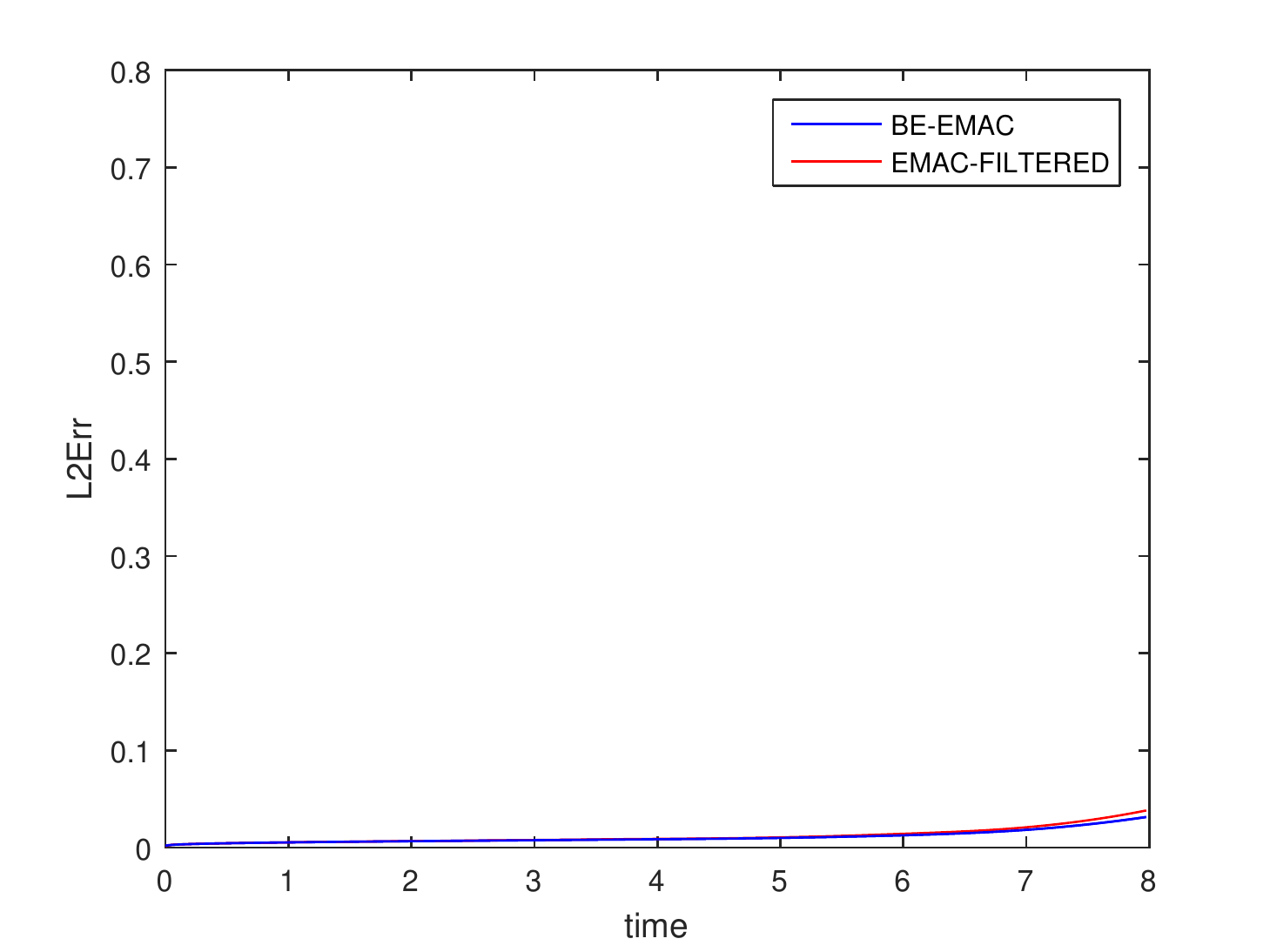}
	\end{subfigure}
	\begin{subfigure}[b]{0.4\linewidth}
		\includegraphics[width=70 mm]{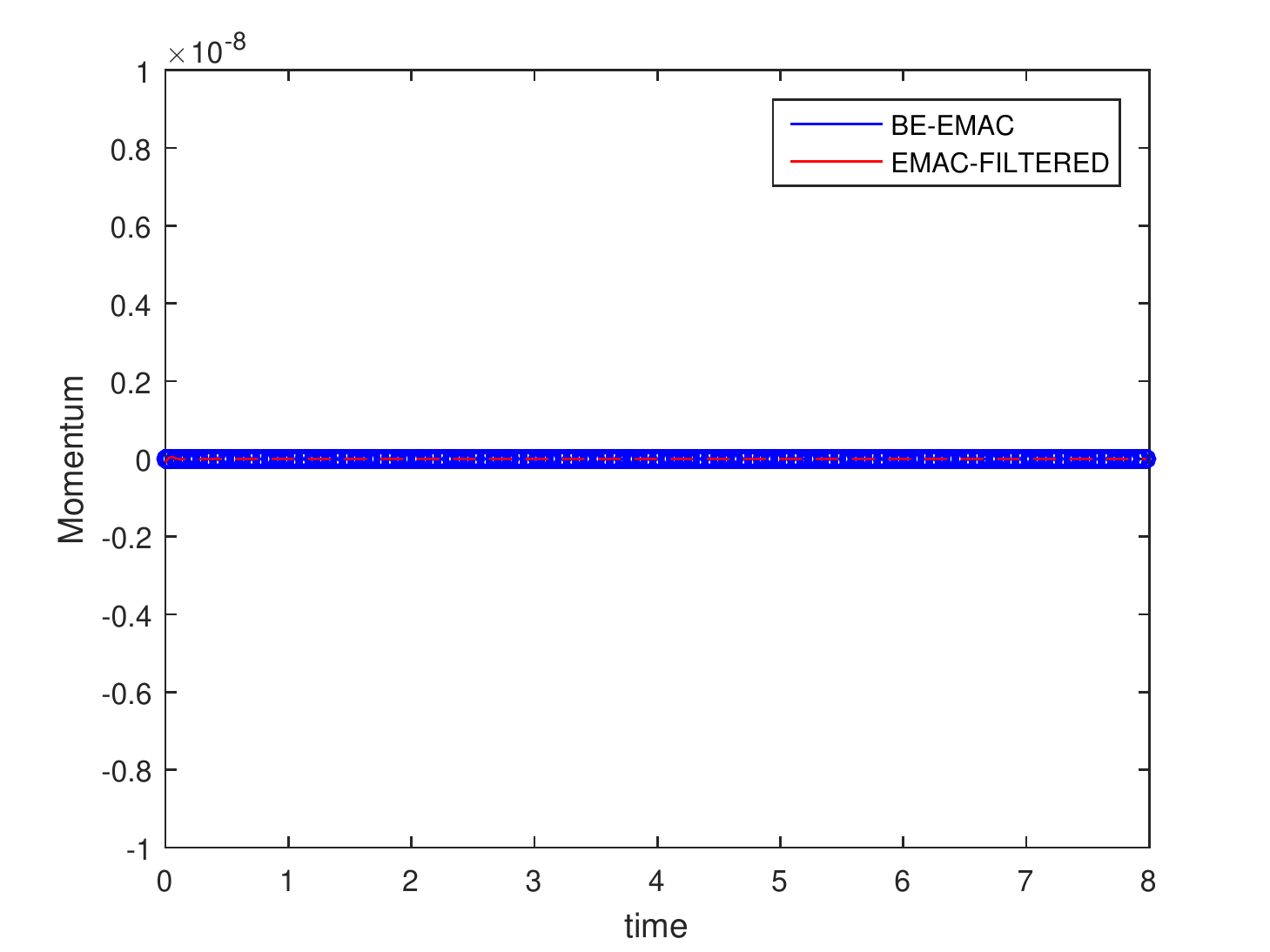}
	\end{subfigure}\\
	\begin{subfigure}[b]{0.4\linewidth}
		\includegraphics[width=70 mm]{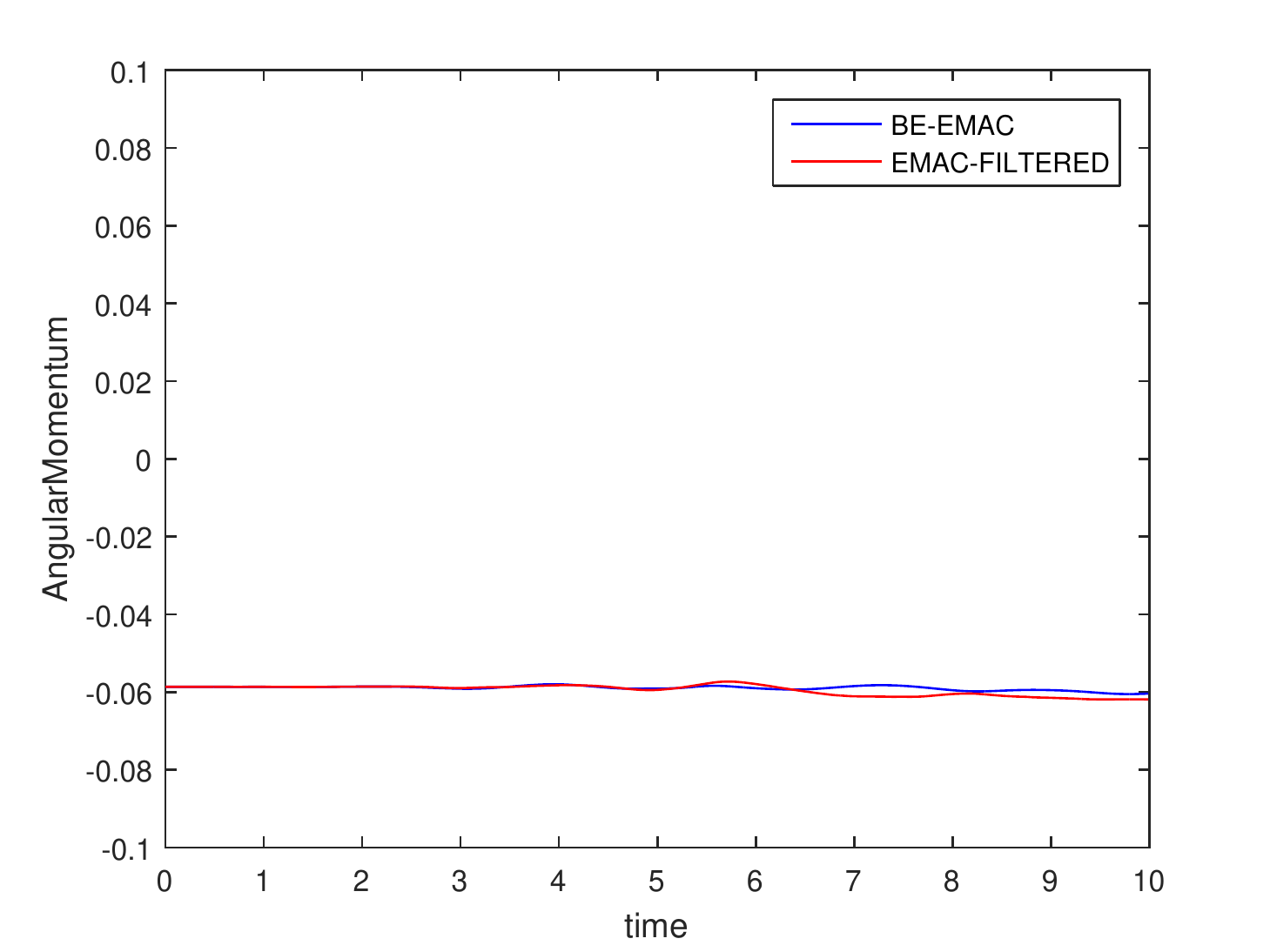}
	\end{subfigure}
\begin{subfigure}[b]{0.4\linewidth}
	\includegraphics[width=70 mm]{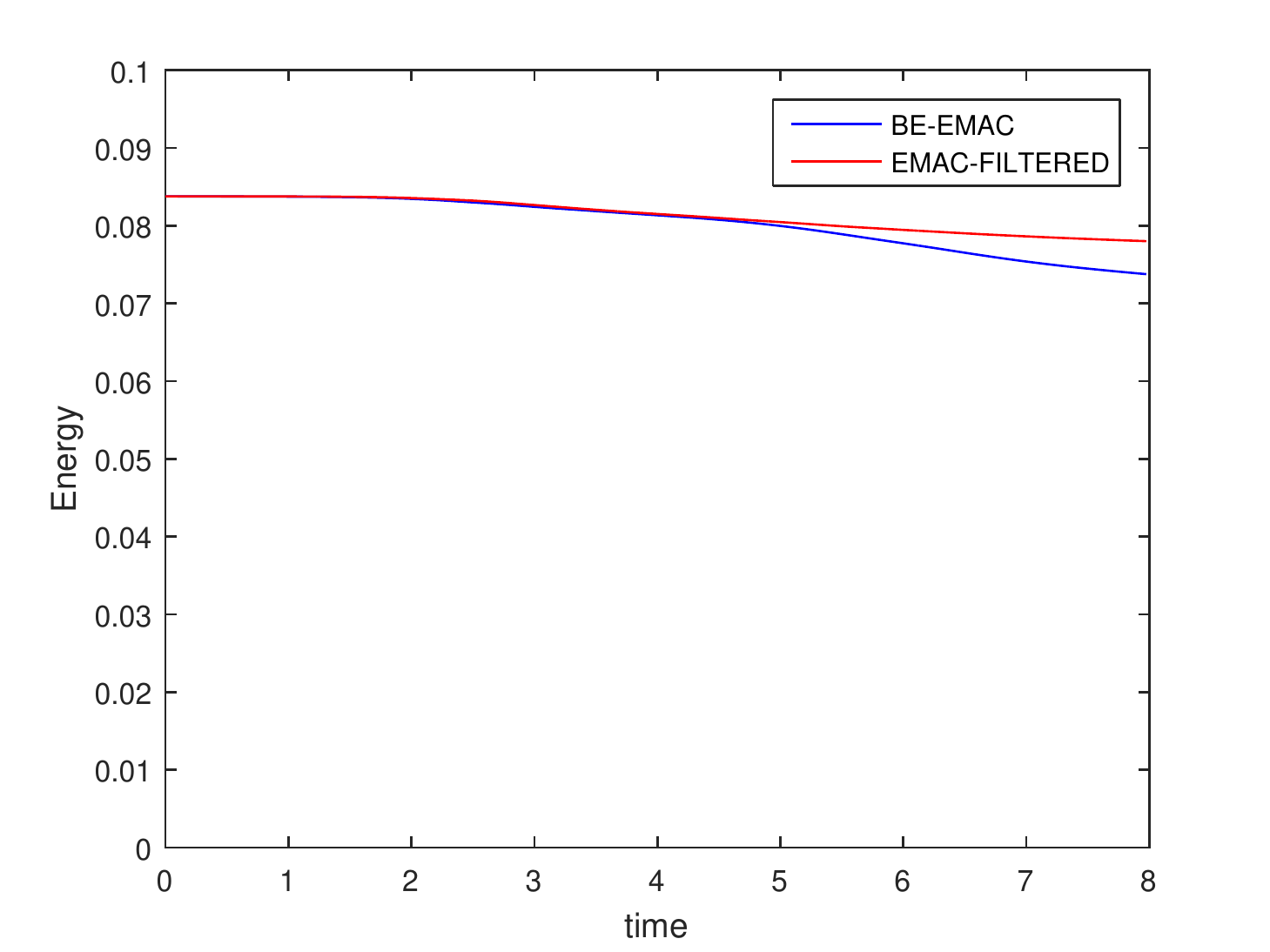}
\end{subfigure}
	\caption{Plots of time versus $L^2$ error, energy, momentum and angular momentum. }
	\label{fig:Conservation}
\end{figure}

\begin{figure}[h!]
	
\centering
	\includegraphics[width=80mm]{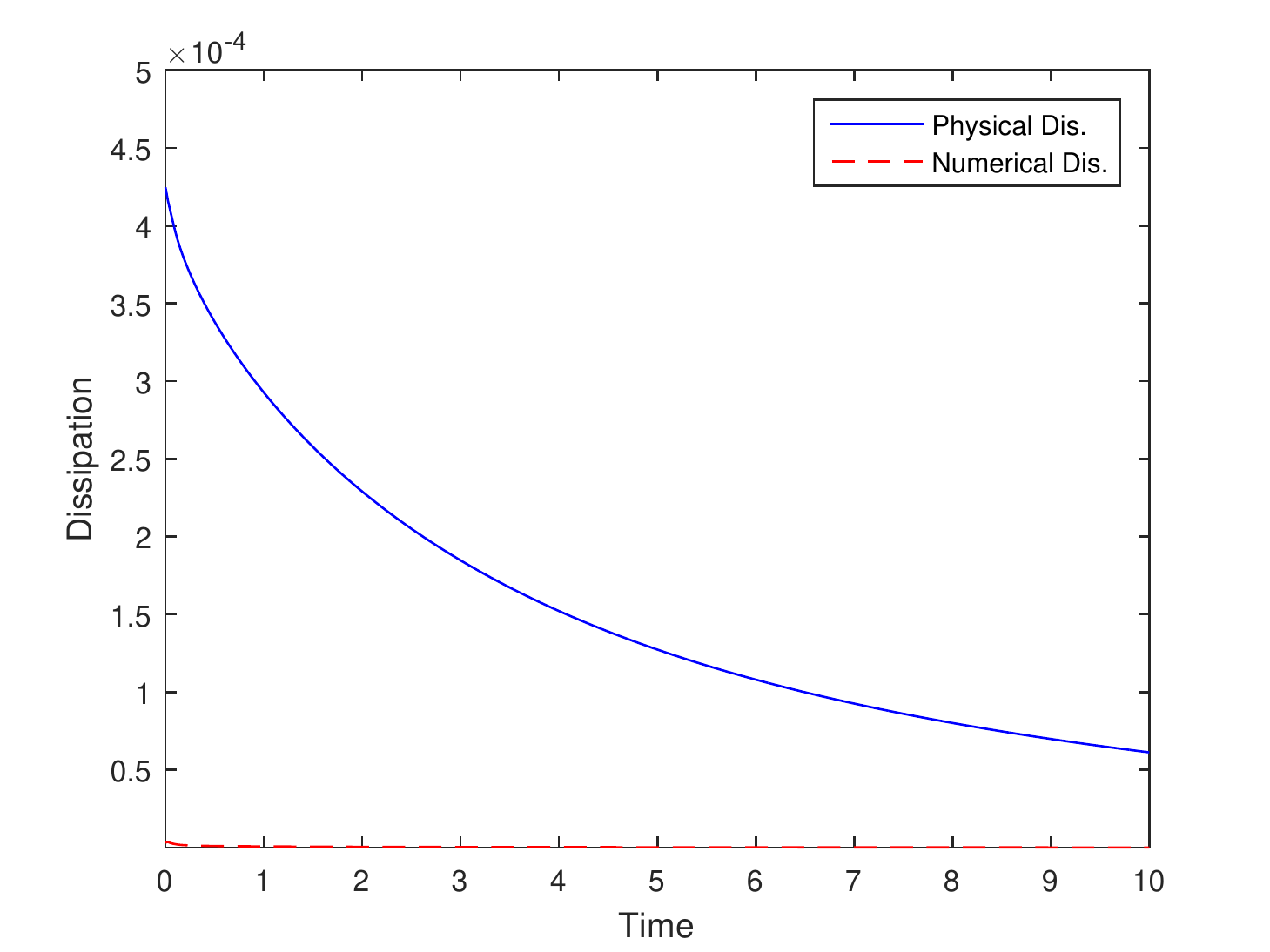}
	\caption{ Numerical and physical dissipation versus time for our scheme}
		\label{fig:dis}
\end{figure}
\subsection{Flow Around a Cylinder}
In the next experiment, we test the performance of the EMAC-FILTERED algorithm on a well-known benchmark problem taken from \cite{J04,MS}, known as channel flow around a cylinder and compare results with that of BE-EMAC scheme. This problem has been widely studied for simulation of fluid flows thanks to its real flow characteristics and highly reliable data to measure accuracy of methods. For the problem set-up, we follow the paper \cite{MS}. The computational domain is a $[0,2.2]\times[0,0.41]$ rectangular channel with a cylinder (circle) of radius $0.05$ centered at $(0.2,0.2)$, seen in Figure \ref{fig:d}.
\begin{figure}[h!]
	\centering
	\includegraphics[width=110mm]{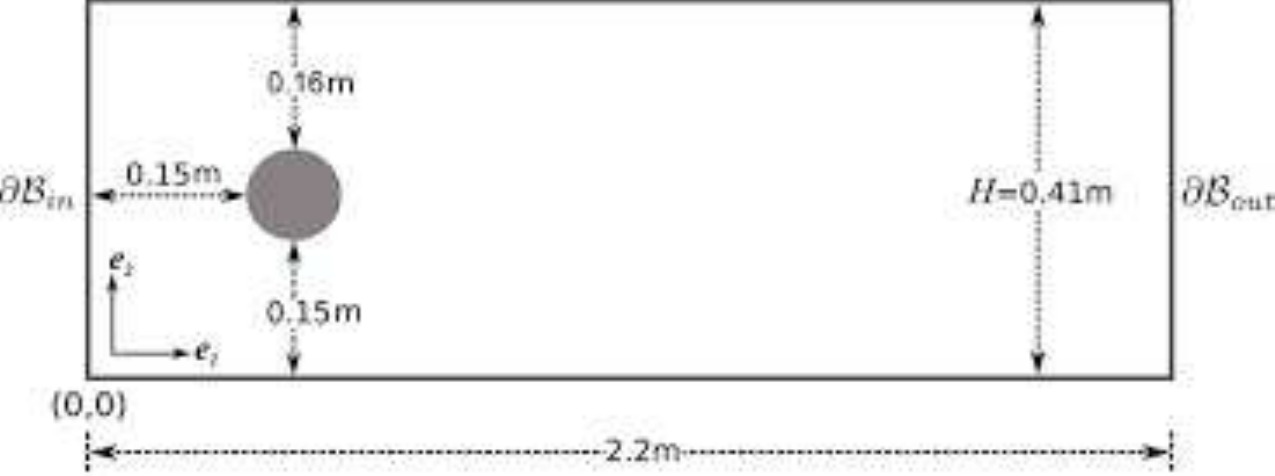}
	\caption{ Domain $\Omega$ of the test problem}
	\label{fig:d}
\end{figure}
 The time dependent inflow and outflow velocity profiles are given by
\begin{eqnarray}
\bu_1(0,y,t)&=&\bu_1(2.2,y,t)=\dfrac{6}{0.41^2}sin\big(\dfrac{\pi t}{8}\big)y(0.41-y)\nonumber\\\quad\nonumber\\\bu_2(0,y,t)&=&\bu_2(2.2,y,t)= 0\nonumber
\end{eqnarray}
No-slip boundary conditions are enforced at the cylinder and walls. We take zero initial condition $\bu(x,y,t)=0 $ and the kinematic viscosity $\nu=10^{-3}$. Moreover, there is no external force acting on the flow. We run the problem on a very coarse mesh consisting of only $ 10210$ total degrees of freedom with an end time $T=8$ and time-step $\Delta t=0.01$.

The plots of flow development of both BE-EMAC scheme and EMAC-FILTERED scheme at time $t=2,4,6,8$ are presented in Figure \ref{fig:u} and Figure \ref{fig:T}, respectively. We observe that although BE-EMAC solutions at $t=2, t=4$ are similar to the DNS of \cite{J04, MS}, solutions at $t=6, t=8$ are totally inaccurate in which even vortices are not formed which incorrectly predicts velocity solution of turbulent-like flows. However, the plots of  EMAC-FILTERED scheme matches quite well with the DNS of \cite{J04, MS} in which the formation of vortices, which are known as Von-Karman street, are  clearly observable. The results of this simulation clearly unrolls the superiority of EMAC-FILTERED scheme over unfiltered case in terms of accuracy and proves the assertion of reducing the undesirable drawbacks of BE discretization by the application of simple time filtering algorithm.

\begin{figure}[!h]
	\centering
	\begin{subfigure}[b]{1\linewidth}
		\includegraphics[width=165 mm]{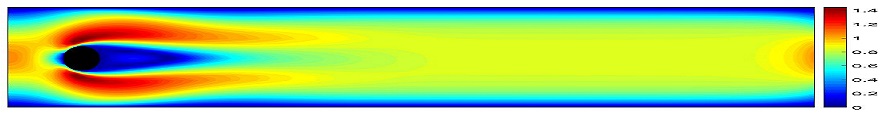}
	\end{subfigure}
	\begin{subfigure}[b]{1\linewidth}
		\includegraphics[width=165 mm]{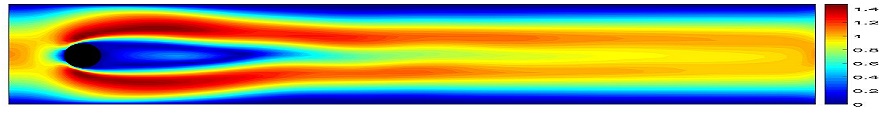}
	\end{subfigure}
	\begin{subfigure}[b]{1\linewidth}
		\includegraphics[width=165 mm]{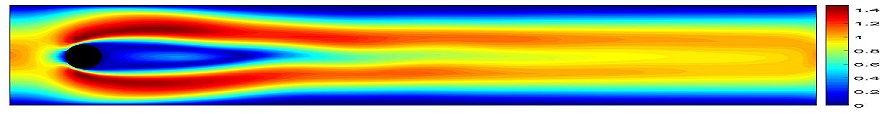}
	\end{subfigure}
	\begin{subfigure}[b]{1\linewidth}
		\includegraphics[width=165 mm]{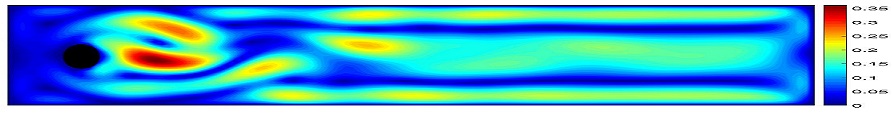}
	\end{subfigure}
	\caption{The velocity of the BE-EMAC scheme at $t = 2, 4, 6, 8$ (from up to down).}
	\label{fig:u}
\end{figure}

\begin{figure}[!h]
	\centering
	\begin{subfigure}[b]{1\linewidth}
		\includegraphics[width=165 mm]{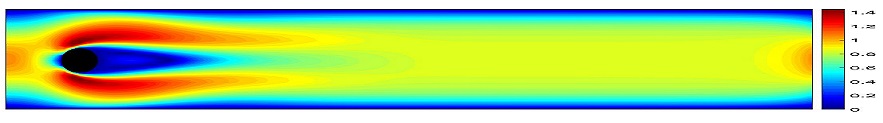}
	\end{subfigure}
	\begin{subfigure}[b]{1\linewidth}
		\includegraphics[width=165 mm]{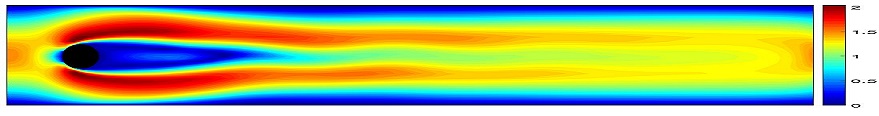}
	\end{subfigure}
	\begin{subfigure}[b]{1\linewidth}
		\includegraphics[width=165 mm]{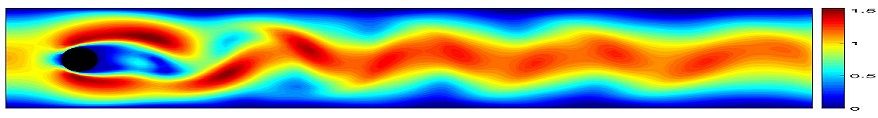}
	\end{subfigure}
	\begin{subfigure}[b]{1\linewidth}
		\includegraphics[width=165 mm]{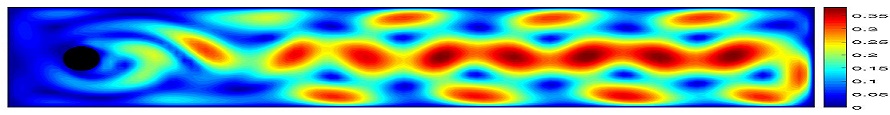}
	\end{subfigure}
	\caption{The velocity of the EMAC-FILTERED scheme at $t = 2, 4, 6, 8$ (from up to down).}
	\label{fig:T}
\end{figure}

For further observation of the accuracy of our method, we compute the drag $c_d(t)$ and lift $c_l(t)$ coefficients at the cylinder. These values are defined in \cite{MS} as follows:
\begin{eqnarray}
c_d(t)&=&\dfrac{2}{\rho L U_{max}^2}\int_{S}\big(\rho\nu\dfrac{\partial\bu_{t_S}}{\partial n}n_y-p(t)n_x\big)dS\nonumber\\\quad\nonumber\\
c_l(t)&=&-\dfrac{2}{\rho L U_{max}^2}\int_{S}\big(\rho\nu\dfrac{\partial\bu_{t_S}}{\partial n}n_x+p(t)n_y\big)dS\nonumber\\\quad\nonumber
\end{eqnarray}
where $S$ is the boundary of the cylinder, $U_{max}$ is the maximum mean flow, $L$ is the diameter of the cylinder, $n=(n_x,n_y)^T$ is the normal vector on the circular boundary $S$ and $\bu_{t_S}$ is the tangential velocity for $t_S=(n_y,-n_x)^T$ the tangential vector.\\ Table \ref{table:tab3} shows the maximum drag $(c_{d,max} )$ and maximum lift $(c_{l,max})$ values of both EMAC-FILTERED and BE-EMAC schemes behind the cylinder together with the times at which they occur. The following reference intervals are given in \cite{J04, MS}:
$$c_{d,max}^{ref}\in [2.93, 2.97],\quad \quad c_{l,max}^{ref}\in [0.47, 0.49].$$
Comparing with the reference values, we see that while both predicts maximum drag coefficients correctly, EMAC-FILTERED scheme provides the best prediction of maximum lift coefficient compared with BE-EMAC scheme which is not even in the reference interval. Thus, this numerical test has revealed that making use of EMAC-FILTERED scheme has notable advantages in terms of practical applications and quantitative means over BE-EMAC scheme.
\begin{table}[h!]
	\begin{center}
		\begin{tabular}{|c|c|c|c|c|}
			\hline
			Method & $c_{d,max}$ &$t(c_{d,max})$ & $c_{l,max}$&$t(c_{l,max})$ \\
			\hline
				EMAC-FILTERED&2.95281 & 3.94 &0.468963&5.79 \\
					\hline
			BE-EMAC&2.95215&3.93 &0.0299554&7.13  \\
			\hline
			Ref \cite{J04} &2.95092&3.93 &0.47795&5.69\\
			\hline
		
			\hline
		\end{tabular}
	\end{center}
	\caption{Comparison of maximum drag and lift coefficients and the times at which they occur.}
	\label{table:tab3}
\end{table}

 \section{Conclusion}
 In this paper, we proposed and analyzed the backward Euler based modular time filter method for the EMAC formulation of NSE. We showed that numerical accuracy is increased from first order to second order without requiring any additional computational effort. Unconditional stability and convergence results that show optimal rates were provided. A rich blend of numerical experiments verified the theoretical expectations and demonstrated reliability and efficiency of the proposed method. The numerical results clearly exposed that the EMAC-FILTERED scheme produces more accurate results and better quality solutions over the unfiltered case. Thus, the simplicity of BE discretization is combined with desirable accuracy and efficiency properties which we aim to arrive. We also showed that the EMAC-FILTERED scheme conserves important physical quantities as good, or better than the BE-EMAC scheme.
\bibliographystyle{plain}

\bibliography{references1}
\end{document}